\newtheorem{thm}{Theorem}[section]
\newtheorem{lem}[thm]{Lemma}
\newtheorem{prop}[thm]{Proposition}
\newtheorem{rem}[thm]{Remark}
\theoremstyle{definition}
\numberwithin{equation}{section}
\newcommand{\R}{\mathbb R}
\newcommand{\e}{\varepsilon}
\newcommand{\p}{\partial}
\newcommand{\dist}{\mbox{dist}\,}
\newcommand{\diam}{\mbox{diam}\,}
\newcommand{\trace}{\mbox{trace}\,}
\newcommand{\comment}[1]{}
\def\h{\hspace*{.24in}}
\newenvironment{myindentpar}[1]%
{\begin{list}{}%
         {\setlength{\leftmargin}{#1}}%
         \item[]%
}
{\end{list}}
\begin{document} 

\title[Singular Monge-Amp\`ere equations on general  bounded convex domains]{Optimal boundary regularity for some singular  Monge-Amp\`ere equations on bounded convex domains}
\author{Nam Q. Le}
\address{Department of Mathematics, Indiana University, 831 E 3rd St,
Bloomington, IN 47405, USA}
\email{nqle@indiana.edu}
\thanks{The author was supported in part by the National Science Foundation under grant DMS-1764248.}

\subjclass[2020]{ 35J96, 35J75}
\keywords{Singular Monge-Amp\`ere equation, optimal global H\"older regularity, supersolution, complete affine hyperbolic spheres, proper affine hyperspheres}

\maketitle
\begin{abstract}
 By constructing explicit supersolutions, we obtain the optimal global H\"older regularity 
 for several singular Monge-Amp\`ere equations on general bounded open convex domains including those related to complete affine hyperbolic spheres, and proper affine hyperspheres. Our analysis reveals that certain singular-looking equations, such as $ \det D^2 u = |u|^{-n-2-k} (x\cdot Du -u)^{-k} $ with zero boundary data, have unexpected degenerate nature.
 \end{abstract}

\section{Introduction and  statements of the main results}

This note is concerned with  the optimal global H\"older regularity  of the unique convex solution $u\in C^{\infty}(\Omega)\cap C(\overline{\Omega})$ to some singular Monge-Amp\`ere equations whose prototype is
\begin{equation}
 \label{SMA}
 \left\{
 \begin{alignedat}{2}
   \det D^{2} u~&= |u|^{-p} \h~&&\text{in} ~\Omega, \\\
u &=0\h~&&\text{on}~\p \Omega
 \end{alignedat}
 \right.
\end{equation}
where $p>0$ and $\Omega$ is a general bounded open convex domain in $\R^n$ ($n\geq 2$). Using suitable subsolutions, we show that $u\in C^{\frac{2}{n+ p}}(\overline{\Omega})$. By constructing explicit supersolutions on some special bounded convex domains, we show that this global H\"older regularity is optimal when $p\geq 1$. The class of such bounded convex domains includes those containing parts of hyperplanes on their boundaries.

There is an extensive literature on (\ref{SMA}) when $\Omega$ is a $C^2$, bounded, strictly convex domain. Our focus here is on the general bounded convex domains which can contain parts of hypersurfaces on their boundaries. To the best of the author's knowledge, the only existence result available for (\ref{SMA}) is the work of Cheng and Yau \cite{CY1} for $p=n+2$. In this case, if $u$ is a solution to (\ref{SMA}), then the Legendre transform of $u$ is a complete affine hyperbolic sphere; see \cite{Cal, CY1, CY2}. 
Also related to the exponent $p=n+2$ is the work of Chen-Huang \cite{CH} who treated the following equation for all $k\geq 0$
 \begin{equation}
 \label{SMAnk2}
\left\{
 \begin{alignedat}{2}
   \det D^2 u~& = |u|^{-n-2-k} (x\cdot Du -u)^{-k} ~&&\text{in} ~  \Omega, \\\
u &= 0~&&\text{on}~ \p\Omega,
 \end{alignedat} 
  \right.
  \end{equation}
  which is related to proper affine hyperspheres. When $k>0$, the domain $\Omega$ in (\ref{SMAnk2}) was required to contain the origin so that the expression $x\cdot Du-u$ is positive.

  Other positive values of $p$ in (\ref{SMA}) are also of interest because of the close relation with the 
$L_p$-Minkowski problem (see,  for example \cite{Luk}) and the Minkowski problem in centro-affine geometry (see, for example \cite{CW, JW} and the references therein).
The case $p=1$ is closely related to the logarithmic Minkowski problem \cite{BLYZ}.
For $p>0$, 
the existence of a unique convex solution to (\ref{SMA}) is perhaps well known to experts. However, since we could not locate a reference and for the reader's convenience, we include its existence proof in part (i) of Theorem \ref{SMApthm} below.
\vglue 0.1cm

Our first main result states as follows.
\begin{thm}
  \label{SMApthm}
  Let $\Omega$ be a bounded, open convex domain in $\R^n$ ($n\geq 2$). 
  Let $p>0$. 
  Then,
  \begin{myindentpar}{1cm}
  (i) there exists a unique convex solution $u\in C^{\infty}(\Omega)\cap C(\overline{\Omega})$ to
  \begin{equation}
  \label{SMAp}
\left\{
 \begin{alignedat}{2}
   \det D^2 u~& = |u|^{-p}~&&\text{in} ~  \Omega, \\\
u &= 0~&&\text{on}~ \p\Omega.
 \end{alignedat} 
  \right.
  \end{equation}
 (ii) 
  moreover, $u\in C^{\frac{2}{n+p}}(\overline{\Omega})$ and the following estimate holds:
  $$|u(x)| \leq C(n, p,\text{diam} (\Omega)) [\dist(x,\p\Omega)]^{\frac{2}{n+ p}} \quad\text{for all } x\in\Omega.$$
  (iii) if $p\geq 1$, then the global H\"older regularity in (ii) is optimal in the following sense: there exist bounded convex domains $\Omega\subset\R^n$ such that $u\not\in C^\beta(\overline{\Omega})$ for any $\beta>\frac{2}{n+p}$. In fact, the class of such bounded convex domains includes those containing parts of hyperplanes on their boundaries.
    \end{myindentpar}
\end{thm}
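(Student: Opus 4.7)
For (i), existence is obtained by approximation: exhaust $\Omega$ by a nested sequence of smooth, uniformly convex subdomains $\Omega_k \nearrow \Omega$, solve the problem on each $\Omega_k$ using the existing theory for strictly convex smooth domains (for instance, via the continuity method applied to the regularised equation $\det D^2 u = (|u|+\e)^{-p}$ and then letting $\e\to 0$), obtain a uniform $L^\infty$ bound on $u_k$ by comparing with the explicit radial solution $-c_0(R^2 - |x|^2)^{(n+1)/(n+p)}$ on a large ball $B_R \supset \Omega$, and pass to the limit locally uniformly using interior Pogorelov/Caffarelli-type estimates. The modulus of continuity of the limit up to $\p\Omega$ is controlled by the H\"older estimate of (ii) applied at the level of the approximating $u_k$. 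Uniqueness follows from the standard comparison principle for Monge-Amp\`ere equations with right-hand side nondecreasing in $u$; indeed $u\mapsto|u|^{-p}$ is increasing on $(-\infty,0)$.

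For (ii), the estimate $|u(x)| \leq C(n,p,\diam\Omega)\,\dist(x,\p\Omega)^{2/(n+p)}$ is obtained by comparison with a family of explicit convex subsolutions indexed by the boundary points. Fix $y \in \p\Omega$ and pick coordinates so that $y=0$ and a supporting hyperplane at $y$ is $\{x_n=0\}$ with $\Omega\subset\{x_n>0\}$; I would try the product ansatz
\[
\underline{u}_y(x) \;=\; -A\,x_n^{\alpha}\bigl(1 - |x'|^2/M^2\bigr), \qquad \alpha := \tfrac{2}{n+p}.
\]
Taking $M \geq C_0(n,p)\diam(\Omega)$ makes $\underline{u}_y$ convex on $\Omega$ (the Schur complement in the block Hessian is nonnegative precisely when $|x'|^2 \leq M^2(1-\alpha)/(1+\alpha)$) and nonpositive on $\p\Omega$. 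A direct block computation gives
\[
\det D^2\underline{u}_y \;=\; (2A/M^2)^{n-1}A\alpha\,x_n^{n\alpha-2}\bigl[(1-\alpha) - (1+\alpha)|x'|^2/M^2\bigr],
\]
so $\det D^2\underline{u}_y \propto x_n^{n\alpha-2}$ while $|\underline{u}_y|^{-p} \propto x_n^{-\alpha p}$; the exponent matching $n\alpha - 2 = -\alpha p$ is exactly the relation $\alpha = 2/(n+p)$, which is why this is the critical H\"older exponent. Choosing $A = C(n,p)\diam(\Omega)^{2(n-1)/(n+p)}$ makes the remaining algebraic inequality hold throughout $\Omega$. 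Since $\underline{u}_y \leq 0 = u$ on $\p\Omega$, comparison gives $\underline{u}_y \leq u$ in $\Omega$, and taking $y$ to be the boundary point nearest to $x$ (so that $x_n = \dist(x,\p\Omega)$ and $x'=0$ in the rotated coordinates) gives $|u(x)| \leq A\,\dist(x,\p\Omega)^{2/(n+p)}$; combined with interior $C^\infty$ regularity this promotes to $u \in C^{2/(n+p)}(\ov\Omega)$.

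For (iii), I would construct an explicit supersolution on a domain whose boundary contains a flat piece $\{(x',0): |x'|<R_1\}\subset\p\Omega$ with $\Omega\subset\{x_n>0\}$ nearby. The function $\psi(x) := -c\,x_n^{2/(n+p)}$ has rank-one Hessian, so $\det D^2\psi \equiv 0 \leq |\psi|^{-p}$; thus $\psi$ is a convex supersolution for any $c>0$. Choose $R_2<R_1$ and $\delta>0$ so that the pillbox $P = \{|x'|<R_2,\,0<x_n<\delta\}$ satisfies $\ov P\cap\p\Omega = \{(x',0): |x'|\leq R_2\}$; on the bottom of $P$, $\psi = 0 = u$, and on the top and lateral faces, which are compactly contained in $\Omega$, $|u|$ is bounded from below by a positive continuous function, so choosing $c>0$ small enough forces $\psi \geq u$ on $\p P$. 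Comparison then gives $\psi \geq u$ in $P$, so $|u(x)| \geq c\,\dist(x,\p\Omega)^{2/(n+p)}$ for $x \in P$ whose nearest boundary point lies on the flat bottom. Letting $x$ approach $0 \in \p\Omega$ along the $x_n$-axis shows $|u(x)|\,\dist(x,\p\Omega)^{-\beta} \to \infty$ for any $\beta > 2/(n+p)$, ruling out $u\in C^\beta(\ov\Omega)$.

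The main obstacle is verifying $\psi \geq u$ on the \emph{lateral} faces of the pillbox in (iii): on those faces $x_n$ ranges down to $0$, and the lower bound $|u| \geq c\,x_n^{2/(n+p)}$ one needs there is essentially the statement being proved, so one must either shrink the pillbox and iterate (invoking the sharp upper bound from (ii) to close the loop), or replace $\psi$ by a refined product-form supersolution -- mirroring the subsolution ansatz of (ii) -- that vanishes on the lateral faces by design. The hypothesis $p \geq 1$ in (iii) seems to enter precisely at this step, by controlling how small $c$ must be chosen relative to the constants in the upper bound from (ii).
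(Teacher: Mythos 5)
Parts (i) and (ii) of your sketch follow the paper's route: regularize the right-hand side, approximate $\Omega$ by smooth uniformly convex domains, and compare $u$ against a product-form subsolution $-A\,x_n^{\alpha}\bigl(1 - |x'|^2/M^2\bigr)$ with $\alpha=2/(n+p)$ (the paper's Lemma~\ref{vallem} is exactly this ansatz, rescaled). One omission in (i): to pass to the limit you must rule out that the locally uniform limit $u$ is identically zero, because $|u|^{-p}$ is undefined at $u\equiv 0$ and the stability theorem for Monge--Amp\`ere measures would give no information. The paper does this via Lemma~\ref{ulb0}, which gives a uniform \emph{lower} bound $\|u_k\|_{L^\infty(\Omega_k)}\geq c(n,p)|\Omega|^{2/(n+p)}$; your radial comparison on a large ball $B_R\supset\Omega$ only gives an upper bound on $\|u_k\|_{L^\infty}$, which does not address this.

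Part (iii) as written has a genuine gap, and you identify it yourself: the rank-one supersolution $\psi=-c\,x_n^{2/(n+p)}$ does not vanish on the lateral faces of the pillbox, and showing $\psi\geq u$ there would essentially require the lower bound on $|u|$ you are trying to prove. Your proposed fixes (``shrink and iterate'', or ``invoke the sharp upper bound from (ii) to close the loop'') are not carried out, and the second suggestion cannot work, since (ii) only bounds $|u|$ from \emph{above}, which is the wrong direction for estimating $u$ on the lateral faces. What the paper actually does (Lemma~\ref{suplem}) is to work on the parabolic cap domain $\Omega=\{(x',x_n): |x'|<1,\ 0<x_n<1-|x'|^2\}$ and construct the supersolution
\[
w \;=\; C\,x_n \;-\; C\,x_n^{a}\,(1-|x'|^2)^{b},\qquad a=\tfrac{2}{n+p},\ \ b=1-a=\tfrac{n+p-2}{n+p},
\]
which vanishes on the \emph{entire} boundary of $\Omega$: on $\{x_n=0\}$ trivially, and on $\{x_n=1-|x'|^2\}$ because $a+b=1$. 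The lateral-face issue therefore never arises. Moreover, your guess about where $p\geq 1$ enters is off: it is not about how small a multiplicative constant $c$ must be. After inserting $a=2/(n+p)$ into the supersolution inequality, the remaining constraint is $C^{n+p}(1-r^2)^{(n+p)b-n+1}(2b)^{n-1}a(1-a)\leq 1$ uniformly for $r<1$; with $b=1-a$ the exponent of $(1-r^2)$ is exactly $p-1$, which must be $\geq 0$ for the left-hand side to stay bounded as $r\to 1$. That is the precise role of $p\geq 1$. A further (minor) point your sketch does not address: the paper shows the construction is affine-invariant under the rescaling $\Omega\to\Omega_t$ (Lemma~\ref{suplemt}), which is what lets you place this model domain inside an \emph{arbitrary} bounded convex $\Omega$ whose boundary contains a flat piece and still get a supersolution with the right boundary values.
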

 
Note that the global H\"older estimate in the  case of $p=n+2$ 
in Theorem \ref{SMApthm} (ii) was treated by Lin-Wang \cite[Section 4]{LW} and also by Jian-Li \cite[Theorem 1.2]{JL_JDE18}. Jian-Li observed that when $n= 2$ and $\Omega=(-1, 1)\times \R_{+}\subset\R^2$, the $C^{\infty}(\Omega)\cap C^{\frac{1}{3}}(\overline{\Omega})$ convex function 
\begin{equation}
\label{uJL}
u(x_1, x_2)= -\sqrt{3} \left(\frac{x_2}{2}\right)^{\frac{1}{3}} (1-x_1^2)^{\frac{1}{3}}
\end{equation} is a solution of $\det D^2 u= |u|^{-4}$ in $\Omega$ with $u=0$ on $\p\Omega$. This example, though for unbounded $\Omega$,
indicates that  the global estimates in Theorem \ref{SMApthm} (ii) should be optimal, at least when $p=n+2$. The bounded convex domains $\Omega$ and the supersolutions constructed in Lemma \ref{suplem} actually confirm the optimality of the global H\"older estimates in Theorem \ref{SMApthm} (ii) for all $p\geq 1$. Our ansatz is quite different from the ansatz in (\ref{uJL}); see also Remarks \ref{p1rem} and \ref{suprem2}. It is an open question to investigate the remaining case $0<p<1$.
\begin{rem}
Since our method of proof of optimality relies only on explicit supersolutions but not on explicit solutions, it is quite flexible and can be extended to (\ref{SMAnk2}); see Theorem \ref{SMAkthm}. When $k>0$, we are not aware of any explicit solution to (\ref{SMAnk2}) on any convex domain $\Omega$. During the course of the proof of Theorem \ref{SMApthm} (iii), we find that when $p=n+ 2$ or $p=1$, (\ref{SMA}) has an explicit solution when $\Omega$ is a circular cylinder; see Remarks \ref{p1rem} and \ref{suprem2}.
\end{rem}

The proof of Theorem \ref{SMApthm} (ii) is based on a subsolution as in \cite{LSNS} which treated (\ref{SMA}) in the case $p= -q<0$; see also \cite{CH} for similar arguments. Interestingly, this argument gives almost the same H\"older exponent for $p>0$ and $p<0$.

When $p=-q<0$, we have the following degenerate Monge-Amp\`ere equation
 \begin{equation}
  \label{DMAq}
\left\{
 \begin{alignedat}{2}
   \det D^2 u~& = |u|^q~&&\text{in} ~  \Omega, \\\
u &= 0~&&\text{on}~ \p\Omega.
 \end{alignedat} 
  \right.
  \end{equation}
 For $0<q\neq n$, the existence of a nonzero convex solution to (\ref{DMAq}) was established by Tso \cite[Corollary 4.2 and Theorem E]{Tso} when $\Omega$ is a bounded, smooth and uniformly convex domain, and by the author \cite[Theorem 4.2]{LSNS} when $\Omega$ is a general bounded convex domain. When $0\leq q<n$, the nonzero solution $u$ is unique. The case $q=n$ corresponds to the Monge-Amp\`ere eigenvalue problem \cite{Ls, LSNS}.   By \cite[Proposition 2.8]{LSNS}, if $u\in C(\overline{\Omega})$ is an Aleksandrov solution to (\ref{DMAq}), then $u\in C^{\infty}(\Omega)$.
 
 If $\Omega$ is an open bounded convex domain in $\R^n$ and $n-2\leq q\neq n$, the author established in \cite[Proposition 5.4]{LSNS} the global almost Lipschitz property for solutions to (\ref{DMAq}), that is, $u\in C^{0,\beta}(\overline{\Omega})$
 for all $\beta\in (0, 1)$.
It is not known if this  global almost Lipschitz result holds for the nonzero solution to (\ref{DMAq}) when $n\geq 3$, $q\in (0, n-2)$ and $\Omega$ is a 
general open bounded convex domain in $\R^n$. The following shows that, in the remaining cases, we have the global almost $C^{\frac{2}{n-q}}$ property.

\begin{prop}
\label{q0n2}
Let $\Omega$ be an open bounded convex domain in $\R^n$ where $n\geq 3$ and $0<q<n-2$.
 Let $u\in C^{\infty}(\Omega)\cap C(\overline{\Omega})$ be the nonzero convex solution of 
  \begin{equation*}
\left\{
 \begin{alignedat}{2}
   \det D^2 u~& = |u|^q~&&\text{in} ~  \Omega, \\\
u &= 0~&&\text{on}~ \p\Omega.
 \end{alignedat} 
  \right.
  \end{equation*}
Then, 
 for all $\beta\in (0, \frac{2}{n-q})$, we have $u\in C^{0,\beta}(\overline{\Omega})$ and the estimate
 \begin{equation}
 \label{uqb}
 |u(x)|\leq C(n, q, \beta, \emph{diam} \Omega) [\emph{dist}(x,\p\Omega) ]^{\beta}~\text{for all}~x\in\Omega.
 \end{equation}
\end{prop}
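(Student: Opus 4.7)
The plan is to carry out the subsolution strategy of \cite{LSNS} (which handled the complementary range $n - 2 \le q < n$) in the regime $0 < q < n - 2$, paralleling the approach that leads to Theorem \ref{SMApthm}(ii) in the singular case. The scheme has three parts: construction of an explicit convex subsolution $\underline u$ with the desired profile, application of a Monge-Amp\`ere comparison principle to dominate $u$ from below by $\underline u$, and passage from pointwise boundary decay to global H\"older continuity.

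Fix $\beta \in (0, \frac{2}{n-q})$. For the subsolution step, following the construction in \cite{LSNS}, I would produce a convex $\underline u \in C(\overline\Omega)$ with $\underline u = 0$ on $\partial\Omega$, $\det D^2 \underline u \ge |\underline u|^q$ in the Aleksandrov sense, and $|\underline u(x)| \le C(n, q, \beta, \diam\Omega) \cdot [\dist(x,\p\Omega)]^\beta$ for all $x \in \Omega$. The natural scaling of $\det D^2 u = |u|^q$ balances at the exponent $2/(n-q)$; working strictly below this exponent leaves the margin needed to build a subsolution that is valid on an arbitrary bounded convex $\Omega$, uniformly in the local geometry of $\partial\Omega$.

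For the comparison, note that $f(r) = |r|^q$ is non-increasing in $r$ on $(-\infty, 0]$, so the comparison principle for Monge-Amp\`ere equations with monotone right-hand side (as used in \cite{LSNS} for the same equation) yields $\underline u \le u$ in $\Omega$. Taking absolute values gives $|u(x)| \le C [\dist(x,\p\Omega)]^\beta$, which is (\ref{uqb}). Global H\"older regularity $u \in C^{0,\beta}(\overline\Omega)$ then follows from this boundary decay together with interior smoothness $u \in C^\infty(\Omega)$ (from \cite[Proposition 2.8]{LSNS}) by the standard convex-function argument: for $x, y \in \Omega$ with $|x-y|$ large relative to $\min(\dist(x,\p\Omega), \dist(y,\p\Omega))$, one controls $|u(x) - u(y)|$ via $|u(x)| + |u(y)|$; in the opposite regime one uses the interior gradient bound $|\nabla u(x)| \lesssim \sup_{B_r(x)}|u|/r$ valid for convex $u$ on any ball $B_r(x) \subset \Omega$.

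The principal obstacle is the subsolution construction for a general bounded convex $\Omega$: naive ans\"atze such as $\underline u = -A [\dist(x,\p\Omega)]^\beta$ are inadequate on domains with flat portions of $\partial\Omega$, since the tangential Hessian of the Euclidean distance function vanishes there and the Monge-Amp\`ere determinant collapses. A more delicate global construction, whose details follow \cite{LSNS}, is required; this is what enforces the strict inequality $\beta < 2/(n-q)$ and the corresponding blow-up of the constant $C$ as $\beta \uparrow 2/(n-q)$.
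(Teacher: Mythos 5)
Your proposal has a genuine gap at the comparison step, and this gap is precisely what the paper's iterative structure is designed to circumvent. For the degenerate equation $\det D^2 u = |u|^q$ with $q>0$, the right-hand side $f(u) = |u|^q = (-u)^q$ is \emph{decreasing} in $u$ on $(-\infty,0]$, which is the wrong monotonicity for the Monge--Amp\`ere comparison principle. Re-run the argument of Lemma \ref{complem} in this setting: if $v-u$ attains a positive interior maximum at $x_0$ with $u(x_0) < v(x_0) < 0$, then $D^2 v(x_0) \le D^2 u(x_0)$ gives $\det D^2 v(x_0) \le \det D^2 u(x_0)$, and the subsolution/supersolution inequalities yield $|v(x_0)|^q \le |u(x_0)|^q$. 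But $|v(x_0)| < |u(x_0)|$, so this is \emph{consistent}, not contradictory — no conclusion follows. (This failure is also reflected structurally: $u \equiv 0$ and the nontrivial solution both solve the same Dirichlet problem, so a blanket ``subsolution $\le$ solution'' comparison cannot hold.) Your appeal to ``the comparison principle for Monge--Amp\`ere equations with monotone right-hand side (as used in \cite{LSNS} for the same equation)'' is therefore not available; indeed \cite[Proposition 5.4]{LSNS} also proceeds by iteration, not by a one-shot subsolution comparison.

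You also identify the wrong obstacle. The subsolution construction you defer is not the hard part: with the ansatz $v_\alpha(x) = x_n^\alpha(|x'|^2 - C_\alpha)$ from Lemma \ref{vallem}, one checks for any $\beta<\frac{2}{n-q}$ that $A\,v_\beta$ satisfies $\det D^2(A v_\beta) \ge |A v_\beta|^q$ in $\Omega$ for $A = A(n,q,\beta,\diam\Omega)$ large, using $2 - \beta(n-q) > 0$. The problem is that this subsolution cannot be slipped under $u$ by the maximum principle, for the reason above. The paper instead bootstraps: Step 1 compares $\det D^2 v$ with $\det D^2 u = |u|^q$ \emph{directly} using only the a priori bound $|u|\le K := \|u\|_{L^\infty}$ (this forces $\alpha = \frac{2}{n}$, since only then is $\det D^2 v_\alpha$ bounded below by a constant), giving $|u(x)| \le C x_n^{2/n}$. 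Step 2 then improves $\beta_k$ to $\beta_{k+1} = \frac{\beta_k q + 2}{n}$ via the linearized operator $U^{ij}D_{ij}$ (cofactor matrix of $D^2 u$) and the AM--GM inequality $\trace(AB) \ge n(\det A)^{1/n}(\det B)^{1/n}$, where the inductive bound $|u|\le C x_n^{\beta_k}$ is essential to control $|u|^{q/n}$. The sequence $\beta_k$ increases geometrically to the fixed point $\frac{2}{n-q}$, so any $\beta < \frac{2}{n-q}$ is reached after finitely many steps. Your framework has no substitute for this feedback loop.
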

When $k>0$ and $\Omega$ is a bounded, open convex domain in $\R^n$ $(n\geq 2)$ that contains the origin in its interior, Chen and Huang \cite{CH} proved the existence of a  unique convex solution $u\in C^{\infty}(\Omega) \cap  C(\overline{\Omega})$ to (\ref{SMAnk2}). Their proof (see \cite[p. 871]{CH}) shows that $u\in C^{\frac{1}{n+ k+1}}(\overline{\Omega})$. Exploring the fact that $\dist(0,\p\Omega)>0$ together with suitable constructions of subsolutions and supersolutions, we show a higher global H\"older regularity $u\in C^{\frac{2+k}{2n+ 2k+2}}(\overline{\Omega})$ and this is in fact optimal. 

Our final theorem states as follows.
\begin{thm} 
\label{SMAkthm}
Let $\Omega$ be a bounded, open convex domain in $\R^n$ $(n\geq 2)$ that contains the origin in its interior. Let $k> 0$. Let $u\in C^{\infty}(\Omega)\cap C(\overline{\Omega})$ be the unique convex solution to 
\begin{equation}
 \label{SMAnk22}
\left\{
 \begin{alignedat}{2}
   \det D^2 u~& = |u|^{-n-2-k} (x\cdot Du -u)^{-k} ~&&\text{in} ~  \Omega, \\\
u &= 0~&&\text{on}~ \p\Omega.
 \end{alignedat} 
  \right.
  \end{equation}
 Then
$u\in C^{\frac{2+k}{2n+ 2k+2}}(\overline{\Omega})$ with the estimate
\begin{equation}
\label{uHk}
|u(x)|\leq C(n, k, \diam (\Omega), \dist(0,\p\Omega) [\dist(x,\p\Omega)]^{\frac{2+k}{2n+ 2k+2}} ~\text{for all}~x\in\Omega,
\end{equation}
and the exponent $\frac{2+k}{2n+ 2k+2}$ is optimal.
\end{thm}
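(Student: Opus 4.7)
The plan is to prove Theorem~\ref{SMAkthm} in three stages: first derive two-sided bounds on the gradient-dependent quantity $x\cdot Du-u$ using $r_{0}:=\dist(0,\partial\Omega)>0$; then obtain the upper H\"older bound via a subsolution barrier; finally establish optimality via an explicit supersolution on a model domain with a flat boundary face, generalizing Lemma~\ref{suplem}. The critical exponent $\alpha=(2+k)/(2n+2k+2)$ will drop out of a scaling computation once the factor $(x\cdot Du-u)^{-k}$ is converted to a power of $|Du|$. Convexity of $u$ with $u=0$ on $\partial\Omega$ gives $u(y)\geq u(x)+Du(x)\cdot(y-x)$; taking $y=0$ yields $x\cdot Du(x)-u(x)\geq -u(0)>0$, while taking $y\in\partial\Omega$ on the ray from $0$ in direction $Du(x)/|Du(x)|$ (which exits $\Omega$ at distance $\geq r_{0}$) yields $x\cdot Du(x)-u(x)\geq r_{0}|Du(x)|$. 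A matching upper bound $x\cdot Du(x)-u(x)\leq\diam(\Omega)|Du(x)|+|u(x)|$ is elementary, so $(x\cdot Du-u)^{-k}$ is effectively comparable to $(|u|+|Du|)^{-k}$.

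For the upper H\"older bound, fix $x_{0}\in\partial\Omega$ with outward-normal supporting hyperplane $\nu$ and set $\ell(x)=\nu\cdot(x_{0}-x)\geq\dist(x,\partial\Omega)$. In the spirit of \cite{LSNS}, I would try a barrier of the form
\[
 v(x)=-A\,\ell(x)^{\alpha}\,\rho(x)^{1-\alpha},
\]
where $\rho$ is a smooth positive strictly concave function on $\overline{\Omega}$ (e.g.\ $\rho(x)=R^{2}-|x-x_{0}|^{2}$ with $R$ large). Because $\ell$ is affine, the normal-normal Hessian of $v$ scales like $A\ell^{\alpha-2}$ while the tangential Hessian scales like $A\ell^{\alpha}$ from the concavity of $\rho$, giving $\det D^{2}v\sim A^{n}\ell^{n\alpha-2}$. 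By the preliminary bounds applied to $v$, the right-hand side satisfies $|v|^{-n-2-k}(x\cdot Dv-v)^{-k}\sim A^{-n-2-2k}r_{0}^{-k}\ell^{k-(n+2+2k)\alpha}$; matching the $\ell$-exponents $n\alpha-2=k-(n+2+2k)\alpha$ gives precisely $\alpha=(2+k)/(2n+2k+2)$. With $A$ chosen large depending on $n,k,r_{0},\diam(\Omega)$, $v$ becomes a pointwise subsolution of \eqref{SMAnk22} with $v\leq 0=u$ on $\partial\Omega$. Comparison then yields $v\leq u$ in $\Omega$, and taking $x_{0}$ to be the closest boundary point to a given $x$ produces \eqref{uHk}.

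Optimality is proved analogously via an explicit supersolution, generalizing Lemma~\ref{suplem}. For a domain $\Omega$ containing the origin and with a flat boundary piece---after rigid motion, $\Omega\subset\{x_{n}<H\}$ with $B'_{r}(0')\times\{H\}\subset\partial\Omega$---I would try on a slab $D=B'_{r/2}(0')\times(H-h,H)\subset\Omega$ a supersolution
\[
 w(x)=-B\,(H-x_{n})^{\alpha}\,\phi(x'),
\]
with $\phi$ a smooth positive concave function on $B'_{r}$. The same scaling analysis, now using $x\cdot Dw-w\geq r_{0}|Dw|$, verifies $\det D^{2}w\leq |w|^{-n-2-k}(x\cdot Dw-w)^{-k}$ precisely at the same critical exponent, provided $B$ is taken small. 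Choosing $B$ further so that $u\leq w$ on $\partial D\setminus\partial\Omega$ via a uniform $L^{\infty}$ bound on $u$, comparison gives $u\leq w$ in $D$, hence $|u(0',x_{n})|\geq c(H-x_{n})^{\alpha}$ near the flat piece, ruling out H\"older regularity of exponent $\beta>\alpha$.

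The principal difficulty is the comparison principle: the right-hand side of \eqref{SMAnk22} depends on $Du$ through $(x\cdot Du-u)^{-k}$, so the classical Aleksandrov comparison for Monge-Amp\`ere does not apply verbatim. At an interior maximum of $v-u$ (resp.\ $u-w$), however, one has $Dv=Du$ and $v>u$ (resp.\ $Du=Dw$ and $u>w$), and a direct computation using the monotonicity of $s\mapsto|s|^{-n-2-k}$ and of $t\mapsto t^{-k}$ shows that the sub-/supersolution inequality forces a strict inequality of $\det D^{2}$ values incompatible with $D^{2}v\leq D^{2}u$ (resp.\ $D^{2}u\leq D^{2}w$) from the extremum condition. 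Making this rigorous, and verifying that $v$ and $w$ satisfy the sub-/supersolution inequalities pointwise rather than merely to leading order in the scaling---with well-chosen $\rho$ and $\phi$ controlling the lower-order corrections---is where the bulk of the technical work lies.
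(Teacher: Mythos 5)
Your overall strategy coincides with the paper's: build a power-law subsolution for the upper H\"older estimate, a power-law supersolution on a model domain for optimality, and use a comparison principle adapted to the gradient-dependent right-hand side (the paper's Lemma~\ref{complem}(ii), which is exactly the maximum-principle argument you sketch). Your scaling calculation correctly identifies $\alpha=(2+k)/(2n+2k+2)$, and the convexity inequality $x\cdot Dv(x)-v(x)\ge r_{0}|Dv(x)|$ (valid for any convex $v$ with $v\le 0$ on $\partial\Omega$ and $B_{r_{0}}(0)\subset\Omega$) is a nice conceptual shortcut; the paper instead obtains the analogous lower bound by the explicit computation~\eqref{vaDva} in Lemma~\ref{vallemk}, using the barrier $(x_{n}+\gamma)^{a}(|x'|^{2}-C)$, whose Hessian determinant factors exactly in cylindrical coordinates, so the ``pointwise rather than to leading order'' issue you flag is cleanly resolved there.

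There are, however, two concrete problems in the optimality half. First, you invoke $x\cdot Dw-w\ge r_{0}|Dw|$ to verify the supersolution inequality $\det D^{2}w\le|w|^{-n-2-k}(x\cdot Dw-w)^{-k}$, but a \emph{lower} bound on $x\cdot Dw-w$ gives an \emph{upper} bound on $(x\cdot Dw-w)^{-k}$, which is the wrong direction: it only bounds the right-hand side from above, not from below. You need the \emph{upper} bound $x\cdot Dw-w\le\diam(\Omega)|Dw|+|w|$ (which you state but then do not use in the supersolution step); plugging it in gives the same $\ell$-exponent, so this is repairable, but as written the logic is reversed. Second, and more seriously, the boundary matching on $\partial D\setminus\partial\Omega$ does not follow from ``a uniform $L^{\infty}$ bound on $u$'' alone. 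Your $w=-B(H-x_{n})^{\alpha}\phi(x')$ vanishes only on the flat face $\{x_{n}=H\}$; on the lateral boundary $\partial B'_{r/2}\times(H-h,H)$, both $|u|$ and $|w|$ tend to $0$ as $x_{n}\to H$, and to ensure $u\le w$ there you would need $|u|\gtrsim(H-x_{n})^{\alpha}$ near those corners, which is precisely what you are trying to prove. Convexity gives at most a \emph{linear} lower bound $|u|\gtrsim(H-x_{n})$, which is too weak since $\alpha<1$. The paper sidesteps this entirely (Lemma~\ref{suplemk}) by choosing the paraboloid-capped domain $\Omega=\{|x'|<1,\ 0<x_{n}+\gamma<1-|x'|^{2}\}$ and adding the linear correction $C_{0}(x_{n}+\gamma)$ to the power-law part, so that $w$ vanishes on \emph{all} of $\partial\Omega$ and the comparison $w\ge u$ holds globally with no subdomain matching needed. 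Since the theorem only asserts optimality for \emph{some} convex domain containing the origin, this domain-specific construction suffices; your more general flat-face setup would require an additional lower barrier argument near $\partial D\setminus\partial\Omega$ to close the gap.
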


\begin{rem} We now compare the results in Theorems \ref{SMApthm} and \ref{SMAkthm}.
\begin{myindentpar}{1cm}
(i) If the right hand-side $|u|^{-p}$ of (\ref{SMAp}) is replaced by $f|u|^{-p}$ where $f$ is a positive bounded function on $\overline{\Omega}$ then, from the proof of Theorem \ref{SMApthm} (ii), the global $C^{\frac{2}{n+ p}}(\overline{\Omega})$ estimate for $u$ still holds. \\
(ii) In the context of Theorem \ref{SMAkthm} where one can show that $x\cdot Du-u$ is bounded from below by a positive constant, if we just use the boundedness of $(x\cdot Du-u)^{-k}$, then the right hand-side of (\ref{SMAnk22}) is bounded from above by $C|u|^{-n-k-2}$ from which we can use (i) to deduce $u\in C^{\frac{1}{n+ k+1}}(\overline{\Omega})$. This result is due to Chen and Huang as mentioned earlier.\\
(iii) Our higher global H\"older regularity for $u$ in Theorem \ref{SMAkthm} is based on the insight that $Du$ is unbounded near the boundary $\p\Omega$ so $(x\cdot Du-u)^{-k}$ tends to $0$ near the boundary with certain rate. The estimate (\ref{vaDva}) in the proof of Lemma \ref{vallemk} suggests that the rate is at least $[\dist(\cdot, \p\Omega)]^{\frac{(2n+k)k}{2n+ 2k+ 2}}.$
\end{myindentpar}
\end{rem}
Surprisingly, despite the singular-looking nature of the right-hand side of  (\ref{SMAnk22}), we find that when $n>4$ and $k$ is large, right-hand side of  (\ref{SMAnk22}) is not completely singular  as it might tend to $0$ near the boundary. Thus (\ref{SMAnk22}) can have a degeneracy nature as in (\ref{DMAq}).
The following proposition makes this more precise.
\begin{prop}
\label{mixc}
Assume $k> 0$ and $0<\gamma<1$. 
Let $$\Omega=\{(x', x_n): |x^{'}|<1, 0<x_n+\gamma < 1- |x^{'}|^2\}.$$
Let $u\in C^{\infty}(\Omega)\cap C(\overline{\Omega})$ be the unique convex solution to (\ref{SMAnk22}). 
Let $$f(x):= |u(x)|^{-n-2-k} (x\cdot Du(x) -u(x))^{-k}.$$
Then, for $x= (0, x_n)\in\Omega$ with $0<x_n+\gamma$ small, we have
\begin{equation}
\label{fmix}
f(x) \leq  C(n, k,\gamma) [\dist(x,\p\Omega]^{\frac{(n-4) k -(2n+ 4)}{2n+ 2k+ 2}}.
\end{equation}
In particular, if $n>4$ and $k>\frac{2n+ 4}{n-4}$, then the right-hand side of (\ref{SMAnk22}) tends to $0$ as $x$ approaches the boundary $\p\Omega\cap\{x_n=-\gamma\}$ along the $x_n$-axis.
\end{prop}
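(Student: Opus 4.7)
The plan is to bound $f(x) = |u(x)|^{-n-2-k}(x \cdot Du(x) - u(x))^{-k}$ as a product of two power-of-distance estimates already available in the paper: a sharp lower bound on $|u|$ at the flat portion of $\partial\Omega$, and the quantitative decay of $(x \cdot Du - u)^{-k}$ from Remark~(iii) after Theorem~\ref{SMAkthm}.

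First, I would establish a sharp lower bound $|u(0, x_n)| \geq c(n,k,\gamma)(x_n+\gamma)^{\frac{2+k}{2n+2k+2}}$ for $x_n+\gamma > 0$ small. The set $\{(x', -\gamma) : |x'|<1\}$ is a flat piece of $\partial\Omega$, so this is exactly the geometric configuration for which the exponent $\frac{2+k}{2n+2k+2}$ in Theorem~\ref{SMAkthm} was shown to be optimal. Using the supersolution constructed to prove that optimality (parallel to Lemma~\ref{suplem}, and invoking Remark~(ii) so that $(x \cdot Dv - v)^{-k}$ can be controlled from above by a constant on compact subsets), one obtains a comparison function $v \geq u$ satisfying $|v(0,x_n)| \geq c(x_n+\gamma)^{\frac{2+k}{2n+2k+2}}$; the lower bound on $|u|$ follows by the standard comparison for singular Monge-Amp\`ere equations used throughout the paper.

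Second, I would invoke the estimate (\ref{vaDva}) from the proof of Lemma~\ref{vallemk} highlighted in Remark~(iii):
$$(x \cdot Du(x) - u(x))^{-k} \leq C(n,k,\gamma)\,[\dist(x, \partial\Omega)]^{\frac{(2n+k)k}{2n+2k+2}}.$$
Multiplying the two estimates at $x=(0,x_n)$, for which $\dist(x, \partial\Omega) = x_n+\gamma$, gives
$$f(x) \leq C\, (x_n+\gamma)^{-\frac{(n+2+k)(2+k)}{2n+2k+2} + \frac{(2n+k)k}{2n+2k+2}},$$
and a routine simplification using $(n+2+k)(2+k) = 2n+4+4k+nk+k^2$ reduces the exponent to $\frac{(n-4)k-(2n+4)}{2n+2k+2}$, which is exactly (\ref{fmix}). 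The last sentence of the Proposition is then immediate: the exponent is strictly positive whenever $n>4$ and $k > \frac{2n+4}{n-4}$, so along the $x_n$-axis $f(x) \to 0$ as $x \to \partial\Omega \cap \{x_n=-\gamma\}$.

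The hard part will be the first step: producing a supersolution of (\ref{SMAnk22}) realising the sharp decay of order $(x_n+\gamma)^{\frac{2+k}{2n+2k+2}}$ at the flat face. This is not a fresh construction but rather the one carried out to prove the optimality half of Theorem~\ref{SMAkthm}; once that is in hand, the proof of the proposition is a short combination. The conceptually interesting point is precisely that the two sharp bounds conspire to produce a \emph{positive} power of the distance to $\partial\Omega$, forcing $\det D^2 u = f \to 0$ at the flat boundary despite the singular appearance of the right-hand side of (\ref{SMAnk22}).
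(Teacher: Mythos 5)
Your overall strategy is right, and your bookkeeping of the exponent $-a(n+2+k)-k(a-1)=\frac{(n-4)k-(2n+4)}{2n+2k+2}$ with $a=\frac{2+k}{2n+2k+2}$ is correct, as is the claim that the lower bound $|u(0,x_n)|\geq c(x_n+\gamma)^a$ comes from the supersolution used to prove optimality in Theorem \ref{SMAkthm} (Lemma \ref{suplemk} is exactly on the domain of the proposition, so $u\leq w$ gives that bound directly). The gap is in your second step, and it is precisely the step the paper flags as the ``main technical point.''

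You propose to \emph{invoke} the estimate (\ref{vaDva}) together with Remark (iii) to conclude that $x\cdot Du-u \gtrsim [\dist(x,\p\Omega)]^{a-1}$. But (\ref{vaDva}) is an estimate for the explicit subsolution $v_a$ of Lemma \ref{vallemk}, not for the solution $u$; the pointwise comparison $u\geq v_a$ controls $|u|$ but says nothing by itself about $Du$ or about $x\cdot Du-u$. This is why Remark (iii) only says the rate is ``suggested,'' not proved. To make your step 2 rigorous you must bound the gradient term for $u$ itself. The paper's proof does this by a convexity/secant argument: it uses both the upper bound $-u(0,x_n)\leq C(x_n+\gamma)^a$ (from the subsolution, see (\ref{upua2})) and the barrier $u(0,x_n+m(x_n+\gamma))\leq C_0\{(m+1)(x_n+\gamma)-[(m+1)(x_n+\gamma)]^a\}$ (from the supersolution, see (\ref{lowua})), and forms the difference quotient over a step of length $m(x_n+\gamma)$ along the $x_n$-axis. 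For $m$ large enough and then $x_n+\gamma$ small enough this yields $u_{x_n}(0,x_n)<-C_1(x_n+\gamma)^{a-1}$; since $x_n\in(-\gamma,-\gamma/2)$ is negative and bounded away from $0$, this produces $x\cdot Du-u> x_n u_{x_n}(0,x_n)\geq C_2(x_n+\gamma)^{a-1}$, which is (\ref{xDuu}). Once you supply this argument, the rest of your computation goes through and yields (\ref{fmix}) exactly as you wrote.
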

In this note, we denote a point $x= (x_1, \cdots, x_n)\in\R^n$
 by $(x', x_n)$ where $x'=(x_1, \cdots, x_{n-1})$. In computations, we usually denote $r=|x'|$. The Lebesgue measure of $\Omega$ is denoted by $|\Omega|$.

The rest of this note is organized as follows: In Section \ref{subsup}, we construct subsolutions and supersolutions of singular Monge-Amp\`ere equations (\ref{SMAp}). In Section \ref{pfsect}, we prove Theorem \ref{SMApthm} and Proposition \ref{q0n2}.
The proofs of Theorem \ref{SMAkthm} and Proposition \ref{mixc} will be given in Section \ref{ksect}.
  
\section{Subsolutions and supersolutions of singular Monge-Amp\`ere equations}
\label{subsup}
In the proof of Theorem \ref{SMApthm}, we frequently use the following comparison principle whose short proof is included for completeness.
\begin{lem}[Comparison principle]
\label{complem} Let $\Omega\subset\R^n$ be a bounded convex domain.
 Let $p>0$. 
 \begin{myindentpar}{1cm}
 (i) Assume that $u, v\in C^2(\Omega)\cap C(\overline{\Omega})$ are convex functions with
 $$\det D^2 u\geq |u|^{-p},\quad \det D^2 v\leq |v|^{-p}\quad\text{in }\Omega$$
 and $0\geq v\geq u$ on $\p\Omega$. Then $v\geq u$ in $\Omega$.\\
 (ii) Let $k>0$. Assume that $u, v\in C^2(\Omega)\cap C(\overline{\Omega})$ are convex functions with $$x\cdot Du-u>0, \quad x\cdot Dv-v>0 \quad \text{in }\Omega,\quad 0\geq v\geq u \quad\text{on }\p\Omega$$ and
 $$\det D^2 u\geq |u|^{-p} (x\cdot Du-u)^{-k},\quad \det D^2 v\leq |v|^{-p}(x\cdot Dv-v)^{-k}\quad\text{in }\Omega.$$
Then $v\geq u$ in $\Omega$.
 \end{myindentpar}
\end{lem}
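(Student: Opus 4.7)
The plan is a classical interior maximum-principle argument applied to $w := u - v$. For part (i), suppose for contradiction that $u > v$ somewhere in $\Omega$. Since $u \leq v$ on $\partial\Omega$, the positive maximum of $w$ is attained at some interior point $x_0 \in \Omega$. At such a $C^2$ interior maximum, $Dw(x_0) = 0$ and $D^2 w(x_0) \leq 0$ (as symmetric matrices), which translates to
$$Du(x_0) = Dv(x_0), \qquad D^2 u(x_0) \leq D^2 v(x_0).$$
Convexity of $u, v$ makes both Hessians positive semidefinite, so monotonicity of $\det$ on the PSD cone upgrades this to $\det D^2 u(x_0) \leq \det D^2 v(x_0)$. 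The goal is then to contradict this via the differential inequalities.

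To invoke the inequalities I first need sign information at $x_0$. Both $u$ and $v$ are convex and continuous on $\overline{\Omega}$ with non-positive boundary values, hence non-positive in $\Omega$ (a convex function cannot exceed its boundary maximum). The hypothesis $\det D^2 u(x_0) \geq |u(x_0)|^{-p}$ with $u \in C^2$ forces $u(x_0) < 0$, since otherwise the right-hand side would be infinite. If $v \equiv 0$ in $\Omega$ the conclusion is immediate (then $u \leq 0 = v$ in $\Omega$ by convexity), so I may assume $v(x_0) < 0$ as well. Combined with $u(x_0) > v(x_0)$ this gives $0 > u(x_0) > v(x_0)$, so $|u(x_0)| < |v(x_0)|$, and since $p > 0$ the map $t \mapsto t^{-p}$ is decreasing on $(0,\infty)$, yielding $|u(x_0)|^{-p} > |v(x_0)|^{-p}$. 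Substituting,
$$\det D^2 u(x_0) \;\geq\; |u(x_0)|^{-p} \;>\; |v(x_0)|^{-p} \;\geq\; \det D^2 v(x_0),$$
contradicting the Hessian determinant inequality. This proves (i).

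Part (ii) requires only one extra observation: at the maximum point $x_0$, the equality $Du(x_0) = Dv(x_0)$ together with $-u(x_0) < -v(x_0)$ gives
$$0 \;<\; x_0 \cdot Du(x_0) - u(x_0) \;<\; x_0 \cdot Dv(x_0) - v(x_0),$$
the left positivity coming from the standing hypothesis. Since $k > 0$, raising to the $-k$ reverses this, so $(x_0\cdot Du(x_0) - u(x_0))^{-k} > (x_0\cdot Dv(x_0) - v(x_0))^{-k}$. Multiplying this with $|u(x_0)|^{-p} > |v(x_0)|^{-p}$ from part (i) and inserting into the two differential inequalities yields $\det D^2 u(x_0) > \det D^2 v(x_0)$, contradicting the maximum-principle inequality exactly as before.

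The main thing to watch is the bookkeeping of signs at $x_0$, namely the strict negativity of $u(x_0)$ and $v(x_0)$ and the positivity of $x_0 \cdot Du(x_0) - u(x_0)$; all three follow from the differential inequalities together with convexity, and the degenerate possibilities (such as $v \equiv 0$) can be dispatched separately. No new technical ingredient beyond the Hessian comparison and the monotonicity of the right-hand sides in $|u|$ and $x \cdot Du - u$ is needed.
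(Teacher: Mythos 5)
Your proof is correct and follows essentially the same route as the paper's: locate an interior critical point of $u-v$ (equivalently the minimum of $v-u$), compare Hessians there, and use strict monotonicity of $t\mapsto t^{-p}$ and $t\mapsto t^{-k}$ to contradict the determinant ordering. The only cosmetic difference is that you spell out the sign bookkeeping ($u(x_0)<0$ from the differential inequality, hence $v(x_0)<u(x_0)<0$), which the paper takes for granted; the sidebar about $v\equiv 0$ is unnecessary once this is noted.
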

\begin{proof} (i) If $v-u$ attains its minimum value on $\overline{\Omega}$ at $x_0\in\Omega$ with $v(x_0)<u(x_0)<0$, then $D^2 v(x_0)\geq D^2 u(x_0)$. It follows that
$$|v(x_0)|^{-p}\geq \det D^2 v(x_0)\geq \det D^2 u(x_0) \geq |u(x_0)|^{-p}.$$
Therefore, $|v(x_0)|^{-p}\geq |u(x_0)|^{-p}$ which contradicts $|v(x_0)|>|u(x_0)|$ and $p>0$.\\
(ii) If $v-u$ attains its minimum value on $\overline{\Omega}$ at $x_0\in\Omega$ with $v(x_0)<u(x_0)<0$, then $Du(x_0)= Dv(x_0)$ and $D^2 v(x_0)\geq D^2 u(x_0)$. From  $ \det D^2 v(x_0)\geq \det D^2 u(x_0)$ and the assumptions on $v$ and $u$, we deduce that
$$(x_0\cdot Dv(x_0)-v(x_0))^{-k} |v(x_0)|^{-p}\geq |u(x_0)|^{-p} (x_0\cdot Du(x_0)-u(x_0))^{-k}.$$
Since $k>0$ and
$$x_0\cdot Dv(x_0)-v(x_0)=x_0\cdot Du(x_0)-v(x_0)>x_0\cdot Du(x_0)-u(x_0),$$
we easily find $|v(x_0)|^{-p}> |u(x_0)|^{-p}$ which contradicts $|v(x_0)|>|u(x_0)|$ and $p>0$.
\end{proof}
The following lemma, motivated by \cite[Lemma 1]{C1}, constructs subsolutions to (\ref{SMAp}).
\begin{lem}[Subsolutions for (\ref{SMAp})]
\label{vallem}
Let $\Omega$ be a bounded convex domain such that $0\in \p\Omega$ and $\Omega\subset \R^n_+=\{x=(x', x_n)\in\R^n: x_n>0\}$.
For $\alpha\in (0, 1)$, we consider the following function on $\Omega$
\begin{equation}
\label{valphaeq}
v_{\alpha}(x)= x_n^{\alpha} (|x'|^2 -C_\alpha)~\text{where}~C_{\alpha}= \frac{1 +2 [\diam\Omega]^2}{\alpha (1-\alpha)}.
\end{equation}
Then
\begin{myindentpar}{1cm}
(i) $v_\alpha$ is convex in $\Omega$ with $v_\alpha\leq 0$ on $\p\Omega$ and
$$\det D^2 v_\alpha(x)\geq 2 x_n^{n\alpha-2}\quad\text{in }\Omega.$$
(ii) 
$$\det D^2 v_{\frac{2}{n+ p}}> |v_{\frac{2}{n+ p}}|^{-p} \quad\text{if } p>0.$$
\end{myindentpar}
\end{lem}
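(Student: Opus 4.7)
The plan is to expand $D^2 v_\alpha$ explicitly using the block structure arising from separating the tangential variables $x' = (x_1, \ldots, x_{n-1})$ from $x_n$, derive a closed formula for $\det D^2 v_\alpha$, and then read off both parts by tuning the exponent $\alpha$ and invoking the specific value of $C_\alpha$.

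First I would record the Hessian block by block: the upper-left $(n-1)\times(n-1)$ block is $2 x_n^\alpha I_{n-1}$, the mixed column is $2\alpha x_n^{\alpha-1} x'$, and the bottom-right entry is $\alpha(\alpha-1) x_n^{\alpha-2}(r^2 - C_\alpha)$ with $r=|x'|$. Since $\Omega\subset \R^n_+$ and the choice of $C_\alpha$ forces $r^2\leq [\diam \Omega]^2 < C_\alpha$, this last entry is positive and the boundary inequality $v_\alpha\leq 0$ on $\p\Omega$ is automatic. Applying the Schur-complement determinant formula collapses everything to
\[
\det D^2 v_\alpha = 2^{n-1}\alpha\, x_n^{n\alpha-2}\bigl[(1-\alpha) C_\alpha - (1+\alpha) r^2\bigr].
\]
The precise value $C_\alpha = \frac{1+2[\diam\Omega]^2}{\alpha(1-\alpha)}$ is engineered so that the bracket is at least $1/\alpha$: the ``$+1$'' in the numerator survives the subtraction of $(1+\alpha) r^2\leq 2[\diam\Omega]^2$. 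This gives $\det D^2 v_\alpha\geq 2^{n-1} x_n^{n\alpha-2}\geq 2 x_n^{n\alpha-2}$ for $n\geq 2$. Positivity of the upper-left block together with positivity of the determinant then ensures convexity, finishing part (i).

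For part (ii), setting $\alpha = \tfrac{2}{n+p}$ produces the exponent identity $n\alpha - 2 = -p\alpha$, so $x_n^{n\alpha-2}=x_n^{-p\alpha}$ matches the $x_n$-dependence of $|v_\alpha|^{-p} = x_n^{-p\alpha}(C_\alpha - r^2)^{-p}$. The desired strict inequality $\det D^2 v_\alpha > |v_\alpha|^{-p}$ therefore reduces to the pointwise statement $2^{n-1}\alpha\bigl[(1-\alpha) C_\alpha - (1+\alpha) r^2\bigr](C_\alpha - r^2)^p > 1$, which holds because $C_\alpha - r^2\geq \tfrac{1}{\alpha(1-\alpha)}\geq 4$ for $\alpha\in (0,1)$, so both bracketed quantities are bounded below by explicit positive constants.

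There is essentially no genuine obstacle; the argument is a single direct Hessian computation. The content lies in the ansatz $v_\alpha(x) = x_n^\alpha(|x'|^2 - C_\alpha)$, which cleanly separates a quadratic dependence in $x'$ from a power dependence in $x_n$, together with the exponent matching $n\alpha - 2 = -p\alpha$ that forces the choice $\alpha = \tfrac{2}{n+p}$. The only point requiring care is the bracket lower bound $(1-\alpha) C_\alpha - (1+\alpha) r^2\geq 1/\alpha$, since this is where the precise form of $C_\alpha$ (not merely its sign) enters and where the uniform constant $2$ in (i) is produced.
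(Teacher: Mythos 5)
Your proof is correct and follows essentially the same route as the paper: write out the block structure of $D^2 v_\alpha$ (the paper uses cylindrical coordinates in $x'$, you use a Schur-complement reduction, but the resulting determinant formula is identical), observe that the definition of $C_\alpha$ makes the bracket $\alpha\bigl[(1-\alpha)C_\alpha-(1+\alpha)r^2\bigr]\geq 1$, and for (ii) use the exponent cancellation $n\alpha-2+p\alpha=0$ at $\alpha=\tfrac{2}{n+p}$ so the inequality reduces to a bound on the $x_n$-independent factor $(C_\alpha-r^2)^p$. Your lower bound $C_\alpha-r^2\geq\frac{1}{\alpha(1-\alpha)}\geq 4$ is a slight variant of the paper's $C_\alpha\geq 4+8[\diam\Omega]^2$, but serves the same purpose; no gap.
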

\begin{proof}
(i) Note that
$$D_{ij}v_{\alpha} = 2x_n^{\alpha}\delta_{ij}~\text{for}~i, j\leq n-1;~ D_{in}v_{\alpha}= 2\alpha x_i x_n^{\alpha-1}~ \text{and}~ D_{nn} v_{\alpha} = \alpha (\alpha-1) (|x'|^2-C_\alpha)x_n^{\alpha-2},$$
where $\delta_{ij}$ is the Kronecker symbol, that is,  $\delta_{ij}=1$ if $i=j$ and $\delta_{ij}=0$ if $i\neq j$.

A short computation (see also the computation in the proof of Lemma \ref{suplem}) gives
\begin{equation}
\label{detval}
\det D^2 v_{\alpha} (x)= 2^{n-1}x_n^{n\alpha-2} [\alpha(1-\alpha) C_\alpha- (\alpha^2+ \alpha) |x'|^2]\quad \text{for }x\in\Omega.
\end{equation}
Therefore (i) easily follows, since, from the definition of $C_\alpha$, we have 
$$\alpha(1-\alpha) C_\alpha- (\alpha^2+ \alpha) |x'|^2 \geq 1. $$
(ii) Now consider $p>0$ and $$\alpha:= \frac{2}{n+ p}.$$
Using $\alpha (1-\alpha)\leq \frac{1}{4}$, we deduce that
$C_{\alpha} \geq  4 + 8  [\diam\Omega]^2.$
Therefore, in view of (\ref{detval}) and $n\geq 2$, we find
\begin{eqnarray*}
|v_\alpha|^p\det D^2 v_\alpha &>& |v_\alpha|^p x_n^{n\alpha-2} [\alpha(1-\alpha) C_\alpha- (\alpha^2+ \alpha) |x'|^2] \\&=&  [\alpha(1-\alpha) C_\alpha- (\alpha^2+ \alpha) |x'|^2](C_\alpha - |x'|^2)^p\\
&\geq&  (C_\alpha - |x'|^2)^p>1.
\end{eqnarray*}
\end{proof}
To establish a lower bound for the $L^{\infty}$ norm of solution to (\ref{SMAp}), we use the following lemma.
\begin{lem}
\label{ulb0}
Let $\Omega\subset\R^n$ be a bounded convex domain in $\R^n$. Let $p,\e\geq 0.$
Let $u\in C^{\infty}(\Omega)\cap C(\overline{\Omega})$ be the convex solution to
 \begin{equation*}
\left\{
 \begin{alignedat}{2}
   \det D^2 u~& = (|u|+\e)^{-p}~&&\text{in} ~  \Omega, \\\
u &= 0~&&\text{on}~ \p\Omega.
 \end{alignedat} 
  \right.
  \end{equation*}
  If $\e<\e_0(n, p, |\Omega|)$, then
  $$\|u\|_{L^{\infty}(\Omega)}\geq c(n, p)|\Omega|^{\frac{2}{n+p}}.$$
  \end{lem}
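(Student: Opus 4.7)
The plan is to compare $u$ with the solution of the constant right-hand side Monge-Amp\`ere problem obtained by freezing $|u|$ at its supremum. Set $M:=\|u\|_{L^{\infty}(\Omega)}$. Since $|u|\leq M$ pointwise, $\det D^2 u = (|u|+\e)^{-p}\geq (M+\e)^{-p}$ throughout $\Omega$.

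Let $w\in C(\overline{\Omega})$ be the convex Aleksandrov solution of $\det D^2 w = (M+\e)^{-p}$ in $\Omega$ with $w=0$ on $\partial\Omega$; existence on a bounded convex domain is classical. Since $\det D^2 u\geq \det D^2 w$ in $\Omega$ and $u=w=0$ on $\partial\Omega$, the standard Monge-Amp\`ere comparison principle gives $u\leq w$ in $\Omega$, and in particular $M\geq \|w\|_{L^{\infty}(\Omega)}$.

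Next I claim that for any bounded convex $\Omega\subset\R^n$ and any $C>0$, the convex solution $w$ of $\det D^2 w=C$ with $w=0$ on $\partial\Omega$ obeys
\[
\|w\|_{L^{\infty}(\Omega)}\geq c_n\, C^{1/n}|\Omega|^{2/n}.
\]
Both sides transform identically under affine changes of variables $y=Sx$: setting $\tilde w(y):=w(S^{-1}y)$, one has $\det D_y^2 \tilde w = |\det S|^{-2}C$ and $|S\Omega|=|\det S|\,|\Omega|$, so the powers of $|\det S|$ cancel. By John's lemma, choose $S$ so that $B_1\subset S\Omega\subset B_n$. On the sub-ball $B_1\subset S\Omega$, compare $\tilde w$ with the explicit radial solution $\phi(y):=\tfrac{1}{2}(|\det S|^{-2}C)^{1/n}(|y|^2-1)$, which has the same constant Monge-Amp\`ere density and vanishes on $\partial B_1$; since also $\tilde w\leq 0=\phi$ on $\partial B_1$, the comparison principle yields $\tilde w\leq \phi$ in $B_1$. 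Evaluating at $y=0$ and using $|S\Omega|\leq |B_n|$, one obtains $\|w\|_{L^{\infty}(\Omega)} = \|\tilde w\|_{L^{\infty}(S\Omega)}\geq \tfrac12(|\det S|^{-2}C)^{1/n}\geq c_n C^{1/n}|\Omega|^{2/n}$, proving the claim.

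Applying the claim with $C=(M+\e)^{-p}$ yields $M(M+\e)^{p/n}\geq c_n|\Omega|^{2/n}$. Choosing $\e_0(n,p,|\Omega|)$ of the form $c(n,p)|\Omega|^{2/(n+p)}$ with a sufficiently small dimensional constant, one checks that $\e<\e_0$ forces $\e\leq M$ (otherwise the displayed inequality would fail), and then $(M+\e)^{p/n}\leq (2M)^{p/n}$ gives $M^{(n+p)/n}\geq c_n'|\Omega|^{2/n}$, i.e.\ $M\geq c(n,p)|\Omega|^{2/(n+p)}$. The principal obstacle is the affinely invariant lower bound for the constant right-hand side problem; the use of John's lemma to reduce to a unit-ball comparison is what converts the unavailable geometric quantity (the inradius) into the intrinsic volume factor $|\Omega|^{2/n}$ required by the final bound.
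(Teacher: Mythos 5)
Your proof is correct, but it travels a different road than the paper's. You freeze the right-hand side at the supremum $M$, invoke the comparison principle on $\Omega$ to reduce to the constant Monge-Amp\`ere problem $\det D^2 w=(M+\e)^{-p}$, then prove (again via comparison, this time against an explicit radial quadratic on the John ball) the affinely invariant lower bound $\|w\|_{L^\infty}\geq c_n\,C^{1/n}|\Omega|^{2/n}$; the remaining dichotomy ($\e$ small versus $M$) is handled as you indicate. The paper instead avoids any auxiliary solution: it rescales $v=u/\alpha$ with $\alpha=\|u\|_{L^\infty}$, passes to a volume-preserving John normalization, and \emph{integrates} the Monge-Amp\`ere measure of $v$ over the half John ball, bounding $\int_{B_{R/2}}\det D^2 v\,dx=|Dv(B_{R/2})|$ from above via the convexity gradient estimate $|Dv|\leq\|v\|_{L^\infty}/\dist(\cdot,\p\Omega)\leq 2R^{-1}$. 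Both routes hinge on John's lemma and produce the same exponent, but your version is barrier-flavored --- it needs the comparison principle for Aleksandrov solutions and existence of the constant-RHS solution on a general convex domain --- whereas the paper's is measure-flavored and entirely self-contained once one knows that $\det D^2 v$ integrates to the gradient image volume. Neither is clearly shorter; the measure estimate is marginally more economical since it dispenses with the intermediate function $w$, while your comparison route makes the affine invariance of the final bound more transparent.
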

  \begin{proof} The proof is similar to that of \cite[Lemma 3.1 (iii)]{LSNS}.
  Under the affine transformation $T:\R^n\rightarrow\R^n$ with $\det T=1$:
 $$\Omega\rightarrow T(\Omega),~u(x)\rightarrow u(T^{-1}x),$$
 the equation $ \det D^2 u = (|u|+\e)^{-p}$, the quantities 
 $\|u\|_{L^{\infty}(\Omega)}~ \text{and}~ |\Omega|$
 are unchanged. Thus, by John's lemma,
 we can assume that $\Omega$ is normalized, that is 
 $$B_R(0)\subset \Omega\subset B_{nR}(0)~\text{for some}~R>0.$$
  Let $\alpha=\|u\|_{L^{\infty}(\Omega)}>0$ and $v=u/\alpha$. Then, $v\in C(\overline{\Omega})\cap C^{\infty}(\Omega)$ with
$v=0$ on $\p\Omega$, and $\|v\|_{L^{\infty}(\Omega)}=1$. Furthermore, $v$ satisfies
$\alpha^n \det D^2 v= (\alpha |v|+\e)^{-p}$ in $\Omega$.
It follows that
$$\frac{1}{(\alpha +\e)^p}\leq \alpha^n \det D^2 v\quad\text{in }\Omega.$$
Integrating both sides over $B_{R/2}(0)$, we find
$$\frac{1}{(\alpha +\e)^p} |B_{R/2}(0)| \leq \alpha^n \int_{B_{R/2}(0)} \det D^2 v~dx.$$
Now we estimate $\int_{B_{R/2}(0)}\det D^2 v~dx$ from above. Since, $B_R(0)\subset\Omega\subset B_{nR}(0)$, the convexity of $v$ and the fact that $v=0$ on $\p\Omega$ give for $x\in B_{R/2}(0)$
$$|Dv(x)|\leq \frac{|v(x)|}{\dist(x,\Omega)} \leq \frac{\|v\|_{L^{\infty}(\Omega)}}{\dist(x,\Omega)}\leq 2R^{-1}.$$
Hence
\begin{equation*}\int_{B_{R/2}(0)}\det D^2 v ~dx = |Dv(B_{R/2}(0))|\leq |B_{2R^{-1}}(0)|.
\end{equation*}
Therefore
$$\frac{1}{\alpha^n(\alpha +\e)^p} \leq  \frac{||B_{2R^{-1}}(0)||}{|B_{R/2}(0)|}= 4^n R^{-2n}\leq C(n) |\Omega|^{-2}.$$
By choosing $\e_0^n(2\e_0)^p C(n) |\Omega|^{-2}<1/2$, we obtain the conclusion of the lemma.
  \end{proof}
  Now, we construct supersolutions to (\ref{SMAp}) with optimal global H\"older regularity.
\begin{lem} [Supersolutions for (\ref{SMAp})]
\label{suplem}
Assume $p\geq 1$. 
Let $$\Omega=\{(x', x_n): |x^{'}|<1, 0<x_n < 1- |x^{'}|^2\}.$$
Then there is a constant $C= C(n, p)$ such that the function $$w= Cx_n -Cx_n^{\frac{2}{n+ p}} (1-|x'|^2)^{\frac{n+ p-2}{n+ p}}$$ is smooth, convex in $\Omega$ and satisfies
$$\det D^2 w\leq |w|^{-p}\quad\text{in } \Omega,\quad\text{and } w=0\quad\text{on }\p\Omega.$$
\end{lem}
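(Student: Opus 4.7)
The plan is to verify the stated properties by direct computation after setting $\alpha=\frac{2}{n+p}\in(0,1)$ and $s=1-|x'|^{2}$, so that $w=C(x_n-x_n^{\alpha}s^{1-\alpha})$. The boundary condition is immediate: on the flat face $\{x_n=0\}$ both terms vanish, while on the paraboloidal face $\{x_n=s\}$ one has $w=Cs-Cs^{\alpha}s^{1-\alpha}=0$. Smoothness in $\Omega$ is clear since $x_n>0$ and $s>0$ there. Inside $\Omega$ we have $0<x_n<s\leq1$ and $1-\alpha\in(0,1)$, which yields $x_n^{\alpha}s^{1-\alpha}>x_n$; hence $w<0$ and
\[
|w|=C\bigl(x_n^{\alpha}s^{1-\alpha}-x_n\bigr)=Cx_n^{\alpha}\bigl(s^{1-\alpha}-x_n^{1-\alpha}\bigr).
\]

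Next I would compute $D^{2}w$ entrywise. Setting $A=2C(1-\alpha)x_n^{\alpha}s^{-\alpha}>0$, a direct differentiation shows $D^{2}w=AN$ where
\[
N=\begin{pmatrix} I_{n-1}+\tfrac{2\alpha}{s}x'(x')^{T} & \tfrac{\alpha}{x_n}x' \\[2pt] \tfrac{\alpha}{x_n}(x')^{T} & \tfrac{\alpha s}{2x_n^{2}} \end{pmatrix}.
\]
The block determinant formula combined with the matrix determinant lemma produces a clean cancellation in the Schur complement: the term $\alpha^{2}|x'|^{2}/x_n^{2}$ is exactly killed, leaving $\det N=\frac{\alpha s}{2x_n^{2}}$, and hence
\[
\det D^{2}w=\tfrac{\alpha}{2}\bigl[2C(1-\alpha)\bigr]^{n}x_n^{n\alpha-2}s^{1-n\alpha}.
\]
Convexity of $w$ comes from the same decomposition: the upper block $I_{n-1}+\frac{2\alpha}{s}x'(x')^{T}$ is positive definite, $A>0$, and the Schur complement equals $\frac{\alpha s/(2x_n^{2})}{1+2\alpha|x'|^{2}/s}>0$, so every leading principal minor of $D^{2}w$ is strictly positive.

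Finally I would verify the supersolution inequality. The identity $\alpha(n+p)=2$ gives $x_n^{n\alpha-2}\cdot x_n^{p\alpha}=1$, so the $x_n$-factors cancel exactly and
\[
|w|^{p}\det D^{2}w=\tfrac{\alpha}{2}\bigl[2(1-\alpha)\bigr]^{n}C^{n+p}\bigl(s^{1-\alpha}-x_n^{1-\alpha}\bigr)^{p}s^{\,p\alpha-1}.
\]
Bounding $(s^{1-\alpha}-x_n^{1-\alpha})^{p}\leq s^{p(1-\alpha)}$ and combining exponents via $p(1-\alpha)+p\alpha-1=p-1\geq0$ together with $s\leq1$ gives $|w|^{p}\det D^{2}w\leq\tfrac{\alpha}{2}[2(1-\alpha)]^{n}C^{n+p}$, which is $\leq1$ once $C=C(n,p)$ is chosen small enough. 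The main obstacle is the algebraic cancellation that produces the minimal formula $\det N=\frac{\alpha s}{2x_n^{2}}$; without it the $x_n$-powers would not balance, and it is precisely the leftover factor $s^{p-1}$ that makes $p\geq1$ the sharp threshold for this ansatz to work.
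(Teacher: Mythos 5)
Your proof is correct and follows essentially the same route as the paper: directly compute $\det D^2 w$ for the given ansatz, observe that the $x_n$-powers cancel exactly because $\alpha(n+p)=2$, and then use $s^{p-1}\le 1$ (which is exactly where $p\ge 1$ enters) to choose $C$. The only technical variation is that you compute the Hessian determinant in Cartesian coordinates via a Schur complement and the matrix determinant lemma, whereas the paper writes the Hessian in cylindrical coordinates in $x'$ and reads off $\det D^2 w=(w_r/r)^{n-2}(w_{rr}w_{x_nx_n}-w_{rx_n}^2)$; both yield the identical formula $\det D^2 w=2^{n-1}\alpha(1-\alpha)^n C^n x_n^{n\alpha-2}s^{1-n\alpha}$.
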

\begin{proof} For $x=(x', x_n)$, we denote $r=|x'|$.
Let
$$v(x)= -Cx_n^a (1-r^2)^b$$
where $0<a, b<1$ and $C>0$. Then
\begin{align*}
v_{r}&= 2Cb x_n^a (1-r^2)^{b-1} r\\
v_{rr}&= 2Cb x_n^a (1-r^2)^{b-2}[1-(2b-1) r^2]\\
v_{x_n}&= -Ca x_n^{a-1}(1-r^2)^b\\
v_{x_n x_n}&= Ca(1-a) x_n^{a-2}(1-r^2)^b\\
v_{x_n r} &=2Cab x_n^{a-1} (1-r^2)^{b-1} r.
\end{align*}
In suitable coordinate systems, such as cylindrical in $x'$, the Hessian of $v$ has the following form
\begin{equation*}
 D^2 v =
 \begin{pmatrix}
  \frac{v_r}{r} & 0 & \cdots & 0 & 0 \\
  0 & \frac{v_r}{r} & \cdots & 0 & 0 \\
  \vdots  & \vdots  & \ddots & \vdots & \vdots  \\
  0 & 0 & \cdots & v_{rr} & v_{r x_n}\\
  0 & 0 & \cdots & v_{r x_n} & v_{x_n x_n}
 \end{pmatrix}.
\end{equation*}
We have
\begin{eqnarray*}\det D^2 v&=&(\frac{v_r}{r})^{n-2} [v_{x_n x_n}v_{rr}- v^2_{x_n r}]\\&=&  C^n (2b)^{n-1} x_n^{na-2}(1-r^2)^{n(b-1)} a [1-a + (1-2b-a)r^2].
\end{eqnarray*}
It follows that $v$ is convex in $\Omega$ provided that
$$0<a, b<1, \quad 1-a + (1-2b-a)r^2>0\text{ in }\Omega.$$
Since $r<1$ in $\Omega$, the last condition is equivalent to $$0\leq 1-a + (1-2b-a) = 2(1-a-b),\quad\text{or } a+ b\leq 1. $$
We would like to have $$\det D^2 v \leq |v|^{-p}= C^{-p} x_n^{-ap} (1-r^2)^{-bp}$$
which is equivalent to
\begin{equation}
\label{abC}
C^{n+p} (2b)^{n-1} x_n^{(n+ p)a-2}(1-r^2)^{(n + p)b-n } a [(1-a) (1-r^2) + 2(1-a-b) r^2] \leq  1 \quad\text{for all } r<1.
\end{equation}
Requiring $v$ to be $C^{\frac{2}{n+p}} (\overline{\Omega})$, we choose
$$a= \frac{2}{n+ p}.$$
Since $a+ b\leq 1$, (\ref{abC}) then implies that
\begin{equation}\label{bCeq} C^{n+ p} (1-r^2)^{(n+p) b-n +1} (2b)^{n-1} a(1-a)\leq 1\quad\text{for all } r<1.
\end{equation}
Note that $$(n+p) b-n +1 \leq (n+ p)(1-a) - n+ 1= p-1$$ with equality if and only $b=1-a$.

Since we want (\ref{bCeq}) to hold for $p>0$ as small as we want, a nature choice is to choose
$$b= 1-a=\frac{n+ p-2}{n+ p}.$$
Then (\ref{abC}) is exactly (\ref{bCeq}) and
(\ref{bCeq}) holds for a suitable $C=C(n,p)$ for each $p\geq 1$.

Set
$$w= Cx_n + v= C[x_n -x_n^a (1-r^2)^b].$$
Then $w$ is smooth, convex in $\Omega$ and $w=0$ on $\p\Omega$. Moreover, since
$|w|= |v|-Cx_n,$
we have
$$\det D^2 w= \det D^2 v\leq |v|^{-p}=|Cx_n + |w||^{-p}\leq |w|^{-p}\quad\text{in }\Omega.$$
\end{proof}
\begin{rem} 
\label{p1rem}
When $p=1$, the constant $C(n, 1)$ in Lemma \ref{suplem} is given by
$$C(n, 1)=(n+1)[2(n-1)]^{-\frac{n}{n+1}}.$$
The proof of Lemma  \ref{suplem} (see (\ref{abC})) shows that if $$\Omega= \{(x', x_n): |x'|<1, x_n>0\},$$ then
$$v(x) =-C(n, 1) x_n^{\frac{2}{n+1}} (1-|x'|^2)^{\frac{n-1}{n+ 1}}$$
is a solution to the singular Monge-Amp\`ere equation
$$\det D^2 v=|v|^{-1} \quad\text{in }\Omega,\quad v=0\quad\text{on }\p\Omega.$$
\end{rem}
\begin{rem}
\label{suprem2}
In the proof of Lemma \ref{suplem},
if we choose $$1-2b-a =0, \quad\text{or } b =\tilde{b}:=\frac{1-a}{2}=\frac{n+ p-2}{2(n+p)},$$ then from $$1-a+ (1-2b-a)r^2 =(1-a) (1-r^2) + 2(1-a-b) r^2 = 1-a,$$ we find that (\ref{abC}) is equivalent to
\begin{equation}\label{bCeq2} C^{n+ p} (1-r^2)^{(n+p) b-n} (2b)^{n-1} a(1-a)\leq 1\quad\text{for all } r<1,
\end{equation}
 which requires
$$0\leq (n+ p) b- n =\frac{p-n-2}{2},\quad \text{or } p\geq n+ 2.$$
Thus, for $p\geq n+2$ and a suitable $C=C(n, p)$, the function $$\tilde v(x', x_n)=-Cx_n^{\frac{2}{n+ p}} (1-|x'|^2)^{\frac{n+ p-2}{2(n+ p)}} $$ satisfies $$\det D^2 \tilde v\leq |\tilde v|^{-p}\quad\text{in } \{(x', x_n): x_n>0, |x'|<1\}.$$
When $p= n+ 2$ and $n=2$, we recover the function (\ref{uJL}) obtained in \cite{JL_JDE18}; see also \cite{JL_SCM18} for the case $n\geq 2$.

Modifying the last step in the proof of Lemma \ref{suplem}, we find that if $x_n^2 = 1-|x'|^2$ on $\p\Omega\setminus\{x_n=0\}$, then $$\tilde v(x', x_n)= - Cx_n^{\frac{2}{n+ p}} (1-|x'|^2)^{\frac{n+ p-2}{2(n+ p)}} = -Cx_n\quad\text{on } \p\Omega\setminus\{x_n=0\}.$$ We summarize these calculations in Lemma \ref{suplem2} below. 
\end{rem}
\begin{lem}[Supersolutions for (\ref{SMAp})]
\label{suplem2}
Assume $p\geq n+2$. 
Let $$\Omega=\{(x', x_n): |x^{'}|<1, 0<x_n < \sqrt{1- |x^{'}|^2}\}.$$
Then there is a constant $C= C(n, p)$ such that the function $$w= Cx_n -Cx_n^{\frac{2}{n+ p}} (1-|x'|^2)^{\frac{n+ p-2}{2(n+ p)}}$$ is smooth, convex in $\Omega$ and satisfies
$$\det D^2 w\leq |w|^{-p}\quad\text{in } \Omega,\quad\text{and } w=0\quad\text{on }\p\Omega.$$
\end{lem}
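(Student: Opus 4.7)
The plan is to mimic the proof of Lemma \ref{suplem}, but replace the choice $b=1-a$ used there with the alternative $b=\tilde b:=\frac{1-a}{2}=\frac{n+p-2}{2(n+p)}$ flagged in Remark \ref{suprem2}. Set $a=\frac{2}{n+p}$, write $r=|x'|$, and consider $v(x)=-Cx_n^{a}(1-r^2)^{\tilde b}$. Since $p\ge n+2\ge 4$, both $a$ and $\tilde b$ lie strictly between $0$ and $1$. The whole point of $b=\tilde b$ is that $1-2b-a=0$, so the convexity criterion $1-a+(1-2b-a)r^2>0$ recorded in the proof of Lemma \ref{suplem} reduces to $1-a>0$; convexity of $v$ in $\Omega$ follows at once, independently of how close $r$ is to $1$.

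Next I would substitute $b=\tilde b$ into the Hessian determinant identity produced in the proof of Lemma \ref{suplem},
\[
\det D^2 v = C^n(2b)^{n-1}a\, x_n^{na-2}(1-r^2)^{n(b-1)}\bigl[1-a+(1-2b-a)r^2\bigr],
\]
which reduces the inequality $\det D^2 v\le |v|^{-p}$ to (\ref{bCeq2}):
\[
C^{n+p}(1-r^2)^{(n+p)\tilde b-n}(2\tilde b)^{n-1}a(1-a)\le 1\quad\text{for every }r<1.
\]
The crux is that $(n+p)\tilde b-n=\frac{p-n-2}{2}\ge 0$ precisely because $p\ge n+2$, so the factor $(1-r^2)^{(n+p)\tilde b-n}$ is bounded above by $1$ uniformly on $\Omega$, and a suitable $C=C(n,p)$ makes (\ref{bCeq2}) hold throughout.

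Finally I would set $w=Cx_n+v$. Adding the affine term $Cx_n$ does not alter the Hessian, so $w$ is smooth, convex, and $\det D^2 w=\det D^2 v$. The boundary condition splits in two: on $\{x_n=0\}\cap\overline{\Omega}$ we have $w=0$ trivially, while on the spherical cap $\{x_n^2=1-|x'|^2\}$ the algebraic identity
\[
x_n^{a}(1-r^2)^{\tilde b}=x_n^{a}(x_n^2)^{(n+p-2)/(2(n+p))}=x_n^{a+(n+p-2)/(n+p)}=x_n
\]
yields $v=-Cx_n$, hence $w=Cx_n+v=0$ there as well. Combining these with $|w|=|v|-Cx_n\le|v|$ one concludes $\det D^2 w=\det D^2 v\le |v|^{-p}\le |w|^{-p}$ in $\Omega$.

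The main obstacle is not a single computationally hard step but rather making transparent why the hypothesis $p\ge n+2$ is forced: it is exactly the borderline condition for $(n+p)\tilde b-n$ to be nonnegative, which is what keeps the left-hand side of (\ref{bCeq2}) bounded as $r\to 1^-$. The compatibility of $v$ with the linear correction $Cx_n$ on the curved part of $\partial\Omega$ is the geometric input tying the exponent $\tilde b=\frac{n+p-2}{2(n+p)}$ to the specific cap domain $\Omega$; both observations are already isolated in Remark \ref{suprem2}, so the proof is essentially a matter of recording these choices carefully.
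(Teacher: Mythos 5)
Your proposal is correct and follows essentially the same route the paper takes: the paper records exactly these computations in Remark \ref{suprem2}, reusing the Hessian identity and the reduction to inequality (\ref{bCeq2}) from the proof of Lemma \ref{suplem}, with the choice $b=\tilde b=\frac{1-a}{2}$ forcing $1-2b-a=0$ and the constraint $(n+p)\tilde b-n=\frac{p-n-2}{2}\ge 0$ explaining why $p\ge n+2$ is needed. The only point worth making explicit (which you do correctly) is the boundary check on the cap $x_n^2=1-|x'|^2$, where $x_n^a(1-r^2)^{\tilde b}=x_n$ so that $w=Cx_n+v$ vanishes there, matching the paper's closing observation in that remark.
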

We state the following rescaled version of Lemma \ref{suplem}.
\begin{lem}
\label{suplemt}
Assume $p\geq 1$ and $t>0$.
Let $$\Omega_t=\{(x', x_n): |x^{'}|<t, 0<x_n < t^2- |x^{'}|^2\}.$$
Let $C=C(n,p)$ be as in Lemma \ref{suplem}. Then, the function
 $$w^t(x', x_n)= Ct^{\frac{2(1-p)}{n+p}}[x_n -x_n^{\frac{2}{n+ p}} (t^2-|x'|^2)^{\frac{n+ p-2}{n+ p}}]$$ is smooth, convex in $\Omega_t$ and satisfies
$$\det D^2 w^t\leq |w^t|^{-p}\quad\text{in } \Omega_t,\quad\text{and } w^t=0\quad\text{on }\p\Omega_t.$$
\end{lem}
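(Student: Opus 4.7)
The plan is to reduce this rescaled lemma to Lemma \ref{suplem} by an anisotropic dilation that matches the scaling symmetry of the Monge-Amp\`ere operator. Introduce new variables $y'=x'/t$ and $y_n=x_n/t^2$, compactly $y=S^{-1}x$ with $S=\mathrm{diag}(t,\ldots,t,t^2)$. The map $x\mapsto S^{-1}x$ is a smooth linear bijection carrying $\Omega_t$ onto the unit domain $\Omega=\{(y',y_n):|y'|<1,\ 0<y_n<1-|y'|^2\}$ of Lemma \ref{suplem}, and sending $\p\Omega_t$ onto $\p\Omega$. Let $w$ denote the supersolution built in Lemma \ref{suplem}, and set $w^t(x):=\lambda\, w(S^{-1}x)$ for a positive constant $\lambda=\lambda(t,n,p)$ to be chosen. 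Smoothness, convexity, and vanishing on $\p\Omega_t$ then pass from $w$ to $w^t$ automatically (linearity and positivity of $\lambda$ preserve convexity, and the boundary correspondence preserves zero data).

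The only substantive step is choosing $\lambda$ so that the singular Monge-Amp\`ere inequality scales correctly. By the chain rule, $D^2 w^t(x)=\lambda\, S^{-1}\,D^2 w(y)\,S^{-1}$, whence
$$\det D^2 w^t(x)=\lambda^n\,(\det S)^{-2}\det D^2 w(y)=\lambda^n\,t^{-(2n+2)}\det D^2 w(y),$$
using $\det S=t^{n-1}\cdot t^2=t^{n+1}$. Because $|w^t(x)|=\lambda\,|w(y)|$, applying $\det D^2 w(y)\leq |w(y)|^{-p}$ from Lemma \ref{suplem} gives
$$\det D^2 w^t(x)\leq \lambda^{n+p}\,t^{-(2n+2)}\,|w^t(x)|^{-p}.$$
Thus $\det D^2 w^t\leq |w^t|^{-p}$ on $\Omega_t$ as soon as $\lambda^{n+p}=t^{2n+2}$, which dictates
$$\lambda=t^{(2n+2)/(n+p)}.$$

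With this $\lambda$, matching the explicit formula for $w^t$ stated in the lemma reduces to bookkeeping powers of $t$. Substituting $y=S^{-1}x$ into $w$ produces the factor $Ct^{-2}$ in front of the linear term $x_n$, and combining $t^{-4/(n+p)}$ from $(x_n/t^2)^{2/(n+p)}$ with $t^{-2(n+p-2)/(n+p)}$ from $(1-|x'|^2/t^2)^{(n+p-2)/(n+p)}$ also produces $t^{-2}$ in front of the second term, so
$$w(S^{-1}x)=Ct^{-2}\bigl[x_n-x_n^{2/(n+p)}(t^2-|x'|^2)^{(n+p-2)/(n+p)}\bigr].$$
Multiplying by $\lambda=t^{(2n+2)/(n+p)}$ and using $(2n+2)/(n+p)-2=2(1-p)/(n+p)$ yields exactly the prefactor $Ct^{2(1-p)/(n+p)}$ claimed in the statement, completing the identification of $w^t$ with the rescaling of $w$. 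There is no genuine obstacle here: the argument is essentially dimensional analysis, enabled by the fact that the right-hand side $|u|^{-p}$ is homogeneous in $u$ and that $\Omega_t$ dilates anisotropically in the precise way under which $\det D^2$ picks up the factor $t^{-(2n+2)}$.
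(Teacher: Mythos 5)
Your proof is correct and follows essentially the same route as the paper's: both recognize $w^t(x)=t^{\frac{2(n+1)}{n+p}}\,w(x'/t,\,x_n/t^2)$, compute how $\det D^2$ transforms under this anisotropic dilation, and then invoke the inequality from Lemma \ref{suplem}. The exponent $\lambda=t^{(2n+2)/(n+p)}$ and the final prefactor $Ct^{2(1-p)/(n+p)}$ match the paper's calculation exactly.
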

\begin{proof}
Let $a= \frac{2}{n+ p}, b=\frac{n+ p-2}{n+ p}$, $\Omega$ and $w$ be as in Lemma \ref{suplem}. Observe that $(x', x_n)\in\Omega_t\Leftrightarrow  \left (\frac{x'}{t}, \frac{x_n}{t^2}\right)\in \Omega.$ Note that
\begin{eqnarray*}
w^t(x', x_n)= Ct^{\frac{2(1+n)}{n+p}}\left[\frac{x_n}{t^2} -\left(\frac{x_n}{t^2}\right)^a \left[1-\frac{|x'|^2}{t^2}\right]^b\right]
= t^{\frac{2(1+n)}{n+p}} w \left (\frac{x'}{t}, \frac{x_n}{t^2}\right).
\end{eqnarray*}
A direct calculation gives
$$\det D^2 w^t(x', x_n)= t^{-\frac{2(1+n) p}{n+p}}(\det D^2 w) \left (\frac{x'}{t}, \frac{x_n}{t^2}\right).$$
 The properties of $w^t$ now follow from those of $w$.
\end{proof}
  \section{Proofs of Theorem \ref{SMApthm} and Proposition \ref{q0n2}}
\label{pfsect}
\begin{proof}[Proof of Theorem \ref{SMApthm}]
(i) We divide the proof into two steps.

{\it Step 1: Uniform global H\"older estimate for approximate solutions of (\ref{SMAp}). }
Let $\{\Omega_k\}_{k=1}^{\infty}$ be a sequence of open, bounded, smooth and uniformly convex domains in $\R^n$
such that
$\Omega_k$ converges to $\Omega$ in the Hausdorff distance. Let $\e_0$ be as in Lemma \ref{ulb0}. For each $k$, consider the Monge-Amp\`ere equation
 \begin{equation*}
\left\{
 \begin{alignedat}{2}
   \det D^2 u_k~& = (|u_k|+\e_k)^{-p}~&&\text{in} ~  \Omega_k, \\\
u_k &= 0~&&\text{on}~ \p\Omega_k
 \end{alignedat} 
  \right.
  \end{equation*}
  where 
  $\e_k= \frac{\e_0}{k}.$
  From \cite[Theorem 7.1]{CNS}, there exists a unique convex solution $u_k\in C^{\infty}(\overline{\Omega_k})$ to the above equation.
  Since $|\Omega_k|\rightarrow |\Omega|$ when $k\rightarrow\infty$, we deduce from Lemma \ref{ulb0} the existence of a constant $c(n, p)>0$ such that
  \begin{equation}
  \label{uklb}
  |u_k\|_{L^{\infty}(\Omega_k)}\geq c(n, p) |\Omega|^{\frac{2}{n+ p}}.
  \end{equation}
  Let 
  $$C_{\alpha, k}= \frac{1 +2 [\diam\Omega_k]^2}{\alpha (1-\alpha)}, \quad \text{where }\alpha:=\frac{2}{n+ p}.$$
  For each $k\geq 1$ and $z\in\Omega_k$, we claim that
  \begin{equation}
  \label{ukval}
  |u_k(z)|\leq C_{\alpha, k} [\dist(z,\p\Omega_k)]^\alpha.
  \end{equation}
  Let $z$ be an arbitrary point in $\Omega_k$. By translation and rotation of coordinates, we can assume that: the origin $0$ of $\R^n$ lies on $\p\Omega_k$,  
 the $x_n$-axis points inward $\Omega_k$, $z$ lies on the $x_n$-axis, and the minimum distance to the boundary of $\Omega_k$ from $z$ is achieved at the origin.  Consider
 $$v_\alpha=  x_n^{\alpha} (|x'|^2 -C_{\alpha, k}).$$
 By Lemma \ref{valphaeq}, $\det D^2 v_\alpha\geq |v_\alpha|^{-p}$. Thus, from  $\det D^2 u_k\leq |u_k|^{-p}$, we can apply Lemma \ref{complem} to deduce that $u_k\geq v_\alpha$ in $ \Omega_k.$ This implies (\ref{uklb}).

 {\it Step 2: Convergence of a subsequence of $\{u_k\}$ to the solution of (\ref{SMAp}).}
 From the convexity of $u_k$,
we find that the functions $u_k$ have uniformly bounded global $C^{0, \frac{2}{n+p}}$ norm on $\overline{\Omega_k}$. By the Arzela-Ascoli theorem, there 
exists a subsequence of $\{u_k\}$, still
denoted by $\{u_k\}$, that converges locally uniformly to a convex function $u$ on $\Omega$. The 
estimate (\ref{ukval}) shows that $u=0$ on $\p\Omega$. Moreover, by
(\ref{uklb}), we have
$$ |u\|_{L^{\infty}(\Omega)}\geq c(n, p) |\Omega|^{\frac{2}{n+ p}}.$$
In particular, $|u|>0$ in $\Omega$.
The stability theorem
of the Monge-Amp\`ere equation (see \cite[Proposition 2.6]{Fi} and \cite[Lemma 1.2.3]{G}) then gives that the function $u$ is actually an Aleksandrov solution of (\ref{SMAp}).
Clearly, $u\in C^{\infty}(\Omega)$; see, for example \cite[Theorem 5]{CY1} or \cite[Proposition 2.8]{LSNS}. The uniqueness of $u$ follows from the comparison principle in Lemma \ref{complem}.\\
(ii) The same argument as in (\ref{ukval}) applied to $\Omega$ instead of $\Omega_k$ gives
\begin{equation}
\label{ulb}
|u(x)|\leq C(n, p,\text{diam} (\Omega)) [\dist(x,\p\Omega)]^{\frac{2}{n+ p}}.
\end{equation}
By the convexity of $u$, we easily obtain $u\in C^{\frac{2}{n+p}}(\overline{\Omega})$.\\
(iii) Let $\Omega$ be a bounded convex domain in $\R^n$ that contains parts of hyperplanes on its boundary.   Let $u\in C^{\infty}(\Omega)\cap C(\overline{\Omega})$ be the solution to (\ref{SMAp}). We show that $u\not \in C^{\beta}(\overline{\Omega})$ for any $\beta>\frac{2}{n+ p}$. Indeed, by translating and rotating coordinates, we can assume that for some $t>0$,
$$\Omega_t:=\{(x', x_n): |x^{'}|<t, 0<x_n < t^2- |x^{'}|^2\}\subset\Omega, \quad \text{and } \{(x',0): |x'|\leq t\}\subset\p\Omega.$$
Let $w^t$ be as in Lemma \ref{suplemt}. Then, $$\det D^2 w^t\leq |w^t|^{-p} \quad\text{in }\Omega_t.$$ From the convexity of $u$ and $\Omega_t\subset\Omega$, we have $u\leq 0$ on $\p\Omega_t$. Now, the comparison principle in Lemma \ref{complem} implies that
$w^t\geq u$ in $\Omega_t$
so
$$|u(x)|\geq |w^t(x)|= Ct^{\frac{2(1-p)}{n+p}}\left[x_n^{\frac{2}{n+ p}} (t^2-r^2)^{\frac{n+ p-2}{n+ p}} - x_n\right].$$
For $x=(0, x_n)$, we have then
$$|u(0, x_n)|\geq Ct^{\frac{2(1-p)}{n+p}}[t^{\frac{2(n+ p-2)}{n+ p}} x_n^\frac{2}{n+ p}-x_n] \geq \frac{C t^{\frac{n-1}{n+p}}}{2}x_n^\frac{2}{n+ p}$$
if $x_n>0$ is small, depending only on $n, p, t$. This estimate show that the exponent $2/(n+p)$ in the upper bound for $u$ in (\ref{ulb}) is optimal.
\end{proof}

\begin{proof}[Proof of Proposition \ref{q0n2}] 
Let $K:= \|u\|_{L^{\infty}(\Omega)}.$ Then,  by \cite[Lemma 3.1 (iii)]{LSNS}, we have the following uniform estimate
$$c(n, q)|\Omega|^{\frac{2}{n-q}}\leq  K\leq C(n, q))|\Omega|^{\frac{2}{n-q}}. $$
By \cite[Proposition 2.8]{LSNS},  we have $u\in C^{\infty}(\Omega)$. 
By the convexity of $u$, the global regularity $u\in C^{0,\beta}(\overline{\Omega})$ for all $\beta\in (0, \frac{2}{n-q})$ follows from the boundary estimate (\ref{uqb}). We will prove this estimate by an iterative argument  in what follows. The proof is similar to that of \cite[Proposition 5.3]{LSNS}.

Let $z$ be an arbitrary point in $\Omega$. By translation and rotation of coordinates, we can assume that: the origin $0$ of $\R^n$ lies on $\p\Omega$,  
 the $x_n$-axis points inward $\Omega$, $z$ lies on the $x_n$-axis, and the minimum distance to the boundary of $\Omega$ from $z$ is achieved at the origin.

 By the convexity of $u$, in order to verify (\ref{uqb}) at $z$, it suffices to prove that for all $x\in\Omega$, we have
 \begin{equation}
 \label{uqb2}
 |u(x)|\leq C(n,\alpha, \diam \Omega) x_n^{\beta} ~\text{for all}~\beta\in (0,  \frac{2}{n-q}). 
 \end{equation}
 {\it Step 1: When $\beta= \frac{2}{n}$. } In this step, we show 
\begin{equation}
\label{b2n}
 |u(x)| \leq C(K, n, \diam \Omega) x_n^{\frac{2}{n}}~\text{for all}~x\in\Omega.
\end{equation}
As in Lemma \ref{vallem}, for $\alpha\in (0, 1)$, we consider
\begin{equation}v_{\alpha}(x)= x_n^{\alpha} (|x'|^2 -C_\alpha)~\text{where}~C_{\alpha}= \frac{1 +2 [\diam\Omega]^2}{\alpha (1-\alpha)}.
\label{alphaeq}
\end{equation}
Then, 
 $v_{\alpha}$ is convex in $\Omega$ with
\begin{equation}\det D^2 v_{\alpha}(x)\geq 2x_n^{n\alpha-2}~\text{in}~\Omega~ \text{and} ~v_{\alpha}\leq 0~\text{on}~\p\Omega.
\label{valn}
\end{equation}
Let 
$$v:= K^{q/n} v_{\frac{2}{n}}.$$
Then, since $\det D^2 v_{2/n}\geq 2,$ we have
$$\det D^2 v = K^q \det D^2 v_{\frac{2}{n}} \geq 2K^q > \det D^2 u.$$
Note that on $\p\Omega$, $u=0\geq v$. Since $u$ and $v$ are $C^2$ in $\Omega$, we can use a simple maximum principle argument to show that
$u\geq v$ in $\Omega.$
Therefore $|u|\leq |v|$ which shows that
$$|u(x)|=|u(x', x_n)| \leq |v(x)|\leq K^{q/n} C_{\frac{2}{n}} x_n^{\frac{2}{n}}~\text{for all}~ x\in\Omega.$$
{\it Step 2: Iterative argument:} We show that, if for some $\beta_k\in (0, \frac{2}{n-q})$ we have
\begin{equation} |u(x)|\leq C(n,q, \beta_k, \diam \Omega) x_n^{\beta_k}~\text{for all}~x\in\Omega,
\label{ubk}
\end{equation}
then for all $x\in\Omega$,
\begin{equation}|u(x)|\leq \overline C(n,q, \beta_k, \diam \Omega) x_n^{\beta_{k+1}}~\text{where }~
\beta_{k+1}:= \frac{\beta_k q+ 2}{n}.
\label{ubk1}
\end{equation}
Note that if $\beta_k<\frac{2}{n-q}$ then
 $\beta_{k+1}<\frac{2}{n-q}$ 
 and
 \begin{equation}
 \label{diffbk}
 \frac{2}{n-q}-\beta_{k+1}= (\frac{2}{n-q}-\beta_k)\frac{q}{n}.
 \end{equation}
Suppose we have (\ref{ubk}). Then
for $\hat C= \hat C(n, q, \beta_k,\diam \Omega)$ large, we have 
\begin{equation}|u(x)|^{\frac{q}{n}}\leq [C(n,q, \beta_k,\diam\Omega)]^{\frac{q}{n}} x_n^{\frac{q\beta_k}{n}} < \hat C x_n^{\frac{n\beta_{k+1}-2}{n}}~\text{in}~\Omega.
\label{Cq}
\end{equation}

Denote by $(U^{ij})=(\det D^2 u) (D^2 u)^{-1}$ the cofactor matrix of the Hessian matrix $D^2 u = (D_{ij}u)$. Then $$\det U=(\det D^2 u)^{n-1} ~\text{and}~U^{ij} D_{ij}u=n \det D^2 u= n |u|^q.$$ Using (\ref{valn}), (\ref{Cq}) and the matrix inequality
$$\trace (AB)\geq n (\det A)^{1/n} (\det B)^{1/n}~\text{for~} A, B~\text{symmetric}~\geq 0,$$ we find that 
\begin{eqnarray}U^{ij} D_{ij}(\hat C v_{\beta_{k+1}}) \geq n\hat C (\det D^2 u)^{\frac{n-1}{n}} (\det D^2 v_{\beta_{k+1}})^{\frac{1}{n}}
 &\geq& n\hat C |u|^{\frac{q(n-1)}{n}} x_n^{\frac{n\beta_{k+1}-2}{n}}\nonumber \\&>& n |u|^q = n\det D^2 u=U^{ij} D_{ij}u~\text{in}~\Omega.
 \label{incr_beta}
\end{eqnarray}
Now, the maximum principle for the operator $U^{ij}D_{ij}$ applied to $u$ and $\hat C v_{\beta_{k+1}}$ gives
$u\geq \hat C v_{\beta_{k+1}}~\text{in}~\Omega. $
It follows that $$|u(x)|=|u|(x', x_n)\leq -\hat C v_{\beta_{k+1}} (x', x_n) \leq \hat C C_{\beta_{k+1}} x_n^{\beta_{k+1}}~ \text{for all}~ x\in\Omega.$$
This gives (\ref{ubk1}). 

{\it Step 3: Conclusion}. 
From {\it Step 1}, we can  choose $\beta_0= \frac{2}{n}$ to initiate {\it Step 2} and obtain a sequence $\beta_k$. From (\ref{diffbk}), we find
$$\frac{2}{n-q}-\beta_k=(\frac{2}{n-q}-\beta_0) \left(\frac{q}{n}\right)^k=\frac{2q}{n(n-q)} \left(\frac{q}{n}\right)^k.$$
Given $\beta\in (0, \frac{2}{n-q})$, we can find a positive integer $k$ such that 
$$\frac{2q}{n(n-q)} \left(\frac{q}{n}\right)^k< \frac{2}{n-q}-\beta.$$
With this $k$, we have $\beta<\beta_k<\frac{2}{n-q}$. The proposition follows by applying {\it Step 2} $k$ times.
\end{proof}

\section{Proofs of Theorem \ref{SMAkthm} and Proposition \ref{mixc}}
\label{ksect}
The outline of the proof of Theorem \ref{SMAkthm} is similar to that of Theorem \ref{SMApthm}.  For the global H\"older regularity, we use the following construction of subsolutions which is similar to Lemma \ref{vallem}. The main difference here is to take into account the origin being in the interior of the convex domain to improve the H\"older exponent.
\begin{lem}[Subsolutions for (\ref{SMAnk22})]
\label{vallemk}
Let $\Omega$ be a bounded convex domain such that $x_0=({(x_0})',-\gamma)\in \p\Omega$ and $\Omega\subset \{x=(x', x_n)\in\R^n: x_n>-\gamma\}$ where $\gamma\geq\gamma_0>0$. Let $k\geq 0$. 
Then for $a= \frac{2+ k}{2n+ 2k+ 2}\in (0, 1)$ and $C= C(n, k,\gamma_0,\diam (\Omega))$ large, the following function 
\begin{equation}
\label{valphaeqk}
v_{a}(x)= (x_n+ \gamma)^{a} (|x'|^2 -C).
\end{equation}
is smooth, convex in $\Omega$ and
satisfies
$$(\det D^2 v_a) (x\cdot Dv_a(x)- v_a(x))^k |v_a|^{n+ k+ 2}\geq 1\quad\text{on }\Omega,\quad \text{and } v_a\leq 0 \quad\text{on }\p \Omega.$$
\end{lem}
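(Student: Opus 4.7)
The plan is to follow the same template as Lemma \ref{vallem}, with $a = \frac{2+k}{2n+2k+2}$ playing the role that $\alpha = \frac{2}{n+p}$ played there, and to establish in addition the one new feature demanded by Lemma \ref{complem}(ii): strict positivity of $x \cdot Dv_a - v_a$ in $\Omega$. The hypothesis $\gamma \geq \gamma_0 > 0$ (i.e.\ the boundary point $x_0$ lies strictly below the hyperplane $\{x_n = 0\}$) is exactly what enables this positivity and, as will be seen in the final calculation, is also what forces the exponent $\frac{2+k}{2n+2k+2}$.

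Setting $t = x_n + \gamma$, I would first repeat the block-matrix calculation from the proof of Lemma \ref{vallem} verbatim, obtaining
$$\det D^2 v_a(x) \;=\; 2^{n-1}\, t^{na-2}\bigl[a(1-a)C - a(a+1)|x'|^2\bigr].$$
Choosing $C$ larger than $\frac{1 + a(a+1)[\diam\Omega]^2}{a(1-a)}$ then gives both positivity of the Schur complement (hence convexity of $v_a$) and the lower bound $\det D^2 v_a \geq 2^{n-1} t^{na-2}$. The condition $v_a \leq 0$ on $\overline{\Omega}$ is automatic once $C > [\diam\Omega]^2$.

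The new step is to compute
$$x \cdot Dv_a - v_a \;=\; t^{a-1}\Bigl[(1+a)\,x_n|x'|^2 + (1-a)\,x_n C + \gamma(|x'|^2 + C)\Bigr]$$
and lower-bound the bracket using $x_n > -\gamma$. A short rearrangement, exploiting the cancellation $\gamma(|x'|^2+C) - \gamma[(1+a)|x'|^2 + (1-a)C] = a\gamma(C-|x'|^2)$, yields
$$x \cdot Dv_a - v_a \;\geq\; a\gamma(C - |x'|^2)\,t^{a-1} \;\geq\; a\gamma_0(C - |x'|^2)\,t^{a-1} \;>\; 0.$$
This is the step where the hypothesis $\gamma \geq \gamma_0 > 0$ is indispensable: without it the sign cancellation would not leave a positive remainder, and the comparison principle could not be invoked.

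Combining the three estimates with $|v_a| = t^a(C - |x'|^2)$ produces
$$(\det D^2 v_a)(x\cdot Dv_a - v_a)^k |v_a|^{n+k+2} \;\geq\; 2^{n-1}(a\gamma_0)^k (C - |x'|^2)^{n+2k+2}\,t^{\,2a(n+k+1)-(k+2)}.$$
The exponent $2a(n+k+1)-(k+2)$ vanishes precisely when $a = \frac{2+k}{2n+2k+2}$, which is why this specific value is forced. With the $t$-dependence gone, the inequality reduces to the scale-free statement $2^{n-1}(a\gamma_0)^k (C-[\diam\Omega]^2)^{n+2k+2} \geq 1$, which can be secured by one further enlargement of $C$. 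The main obstacle is really just the bookkeeping: choosing a single $C = C(n,k,\gamma_0,\diam\Omega)$ that simultaneously ensures convexity, the determinant lower bound, the positivity of $x\cdot Dv_a - v_a$, and the final coefficient inequality. There is no deeper analytic difficulty once the exponents have been matched.
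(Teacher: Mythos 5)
Your proof is correct and follows essentially the same path as the paper: compute $\det D^2 v_a$ by the same block–matrix formula as in Lemma \ref{vallem}, rewrite $x\cdot Dv_a - v_a$ so that the coefficient of $(x_n+\gamma)$ is nonnegative and a positive remainder $a\gamma(C-|x'|^2)$ survives, and then note that the $(x_n+\gamma)$-exponent in the product vanishes precisely for $a=\frac{2+k}{2n+2k+2}$. The only cosmetic difference is that you normalize $\det D^2 v_a \geq 2^{n-1}t^{na-2}$ before multiplying, whereas the paper carries the factor $[(a-a^2)C - a(1+a)r^2]$ through to the final line; the estimates and the role of $\gamma\geq\gamma_0>0$ are identical.
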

\begin{proof} For $a\in (0, 1)$ to be chosen, let $v_a(x)=  (x_n+ \gamma)^{a} (r^2 -C)$ where $r=|x'|$ and $C>[\diam (\Omega)]^2$. We calculate as in Lemmas \ref{vallem} and \ref{suplem} that $v_a$ is convex and 
\begin{eqnarray*}
\det D^2 v_a &=& (\frac{v_{a, r}}{r})^{n-2} [v_{a, x_n x_n}v_{a, rr}- v^2_{a, x_n r}]\\
&=& 2^{n-1}  (x_n+\gamma)^{an-2} [(a-a^2) C- a(1+ a)r^2].
\end{eqnarray*}
Moreover, since $\gamma\geq \gamma_0>0$,
\begin{eqnarray}
\label{vaDva}
x\cdot Dv_a - v_a &=& x_n v_{a, x_n} + r v_{a, r} - v\nonumber\\&=& (x_n +\gamma)^{a-1} [ax_n (r^2-C)+ 2r^2 (x_n+\gamma) + (C-r^2) (x_n+\gamma) ]\nonumber\\
&=&  (x_n +\gamma)^{a-1} \left\{(x_n+\gamma) [(1+ a) r^2 + (1-a) C] + a\gamma(C-r^2)\right\}\nonumber\\
&\geq & (x_n +\gamma)^{a-1} a\gamma_0 (C-r^2).
\end{eqnarray}
Therefore
\begin{multline*}
(\det D^2 v_a) (x\cdot Dv_a(x)- v_a(x))^k |v_a|^{n+ k+ 2}\\ \geq 2^{n-1}(a\gamma_0)^{k}(x_n+\gamma)^{a(2n+ 2k+ 2)- (2+ k)} [(a-a^2) C- a(1+ a)r^2] [C-r^2]^{n+ 2k+ 2}
\geq 1
\end{multline*}
if  $a= \frac{2+ k}{2n+ 2k+ 2}\in (0, 1)$ and $C= C(n, k,\gamma_0,\diam (\Omega))$ is large.
\end{proof}
For the optimality of the global H\"older exponent of solution to (\ref{SMAnk22}), we use the following construction of supersolutions which is similar to Lemma \ref{suplem}.
\begin{lem} [Supersolutions for (\ref{SMAnk22})]
\label{suplemk}
Assume $k\geq 0$ and $0<\gamma<1$. 
Let $$\Omega=\{(x', x_n): |x^{'}|<1, 0<x_n+\gamma < 1- |x^{'}|^2\}.$$
Then there is a positive constant $C_0(n, k,\gamma)$ such that the function $$w= C_0(x_n+\gamma) -C_0(x_n+\gamma)^{\frac{2+k}{2n+ 2k+2}} (1-|x'|^2)^{\frac{2n+ k}{2n+ 2k+2}}$$ is smooth, convex in $\Omega$ and satisfies
$$\det D^2 w\leq |w|^{-n-2-k} (x\cdot Dw-w)^{-k}\quad\text{in } \Omega,\quad\text{and } w=0\quad\text{on }\p\Omega.$$
\end{lem}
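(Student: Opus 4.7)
The plan is to follow the template of Lemma \ref{suplem} with a shift $x_n \mapsto x_n+\gamma$ and the specific exponents $a = \frac{2+k}{2n+2k+2}$, $b = \frac{2n+k}{2n+2k+2}$, which satisfy the key identity $a+b=1$. Writing $\xi = x_n+\gamma$ and $r = |x'|$, I would set $v(x) = -C_0\xi^a(1-r^2)^b$ and $w = C_0\xi + v$, so that the Hessian of $w$ coincides with that of $v$. The vanishing of $w$ on $\partial\Omega$ splits into two pieces: on $\{\xi=0\}$ both summands vanish, while on the paraboloid $\{\xi = 1-r^2\}$ the identity $a+b=1$ gives $v = -C_0(1-r^2) = -C_0\xi$, cancelling the linear piece. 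Throughout $\Omega$, the inequality $\xi \leq 1-r^2$ combined with $a+b=1$ yields $|v| \geq C_0\xi$, hence $w \leq 0$. Convexity of $v$ reduces to the positivity of the factor $1-a+(1-2b-a)r^2 = b(1-r^2) > 0$ exactly as in Lemma \ref{suplem}, producing
\[ \det D^2 w = \det D^2 v = C_0^n(2b)^{n-1}ab\,\xi^{na-2}(1-r^2)^{1-na}. \]

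Next I would compute, using $x_n = \xi-\gamma$ and $a+b=1$,
\[ x\cdot Dw - w = x\cdot Dv - v - C_0\gamma, \qquad x\cdot Dv - v = C_0\xi^{a-1}(1-r^2)^{b-1}\bigl[b(1+r^2)\xi + a\gamma(1-r^2)\bigr]. \]
The chief obstacle I anticipate is verifying that $x\cdot Dw - w > 0$ throughout $\Omega$, which is what makes the right-hand side of \eqref{SMAnk22} well-defined; it amounts to $x\cdot Dv - v > C_0\gamma$. Dividing by $C_0$ and letting $s = \xi/(1-r^2) \in (0,1)$, this reduces to $f(s) := b(1+r^2)s^a + a\gamma s^{-b} > \gamma$. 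A one-variable calculus minimization (permissible because $a+b=1$) shows that $f$ attains its minimum at $s^* = \gamma/(1+r^2) \in (0,1)$, with the elegant value $f(s^*) = \gamma^a(1+r^2)^b$; the hypothesis $\gamma<1\leq 1+r^2$ then gives $f(s^*) > \gamma$ as needed.

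For the supersolution inequality itself, I would observe that $|w| \leq |v|$ and $0 < x\cdot Dw-w < x\cdot Dv-v$, so it suffices to prove
\[ \det D^2 v \leq |v|^{-n-2-k}(x\cdot Dv-v)^{-k}. \]
Substituting the closed-form expressions above and bounding the bracket via $\xi \leq 1-r^2$ by $b(1+r^2)\xi+a\gamma(1-r^2) \leq (2b+a\gamma)(1-r^2)$, then collecting powers, shows that the choice $a=\frac{2+k}{2n+2k+2}$ is precisely the value making the exponent of $\xi$ cancel out; the surviving factor of $(1-r^2)$ carries the positive exponent $n+k+1$. The inequality thereby reduces to
\[ C_0^{2n+2k+2}(2b)^{n-1}ab(2b+a\gamma)^k(1-r^2)^{n+k+1} \leq 1, \]
and since $(1-r^2)^{n+k+1}\leq 1$, this holds once $C_0=C_0(n,k,\gamma)$ is chosen sufficiently small.
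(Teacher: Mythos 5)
Your proof is correct and follows the same overall strategy as the paper: set $v = -C_0\xi^a(1-r^2)^b$ with $\xi = x_n+\gamma$, $a=\frac{2+k}{2n+2k+2}$, $b=1-a$, let $w = C_0\xi + v$, and reduce the supersolution inequality for $w$ to the cleaner inequality $(\det D^2 v)\,|v|^{n+k+2}\,(x\cdot Dv - v)^k \leq 1$ by using $\det D^2 w = \det D^2 v$, $|w|\leq|v|$, and $0 < x\cdot Dw - w \leq x\cdot Dv - v$; the chosen $a$ is exactly what annihilates the $\xi$-power, leaving a nonnegative power of $(1-r^2)$ and a small constant $C_0$. The one step where you diverge from the paper is the positivity of $x\cdot Dw - w$: you minimize $x\cdot Dv - v$ over $s=\xi/(1-r^2)\in(0,1)$, obtaining the tidy lower bound $C_0\gamma^a(1+r^2)^b > C_0\gamma$, whereas the paper simply applies the supporting-hyperplane inequality for the convex $w$ at the origin,
\[
x\cdot Dw(x) - w(x) \geq -w(0) = C_0\gamma^a - C_0\gamma > 0 \quad\text{since } 0<\gamma<1,\ 0<a<1,
\]
a one-line argument. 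Both are valid; the paper's is shorter. Your slightly sharper upper bound $b(1+r^2)\xi + a\gamma(1-r^2) \leq (2b+a\gamma)(1-r^2)$ keeps an extra factor of $(1-r^2)$ inside $(x\cdot Dv-v)^k$, yielding $(1-r^2)^{n+k+1}$ in the final product versus the paper's $(1-r^2)^{n+1}$; both exponents are nonnegative, so either version lets you conclude by choosing $C_0(n,k,\gamma)$ sufficiently small.
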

\begin{proof} For $0<a, b<1$ with $a+ b\leq 1$, and $C>0$ to be chosen,
let $$v= -C(x_n+\gamma)^a (1-r^2)^b\quad \text{and }w= C(x_n + \gamma) + v,\quad \text{where } r=|x'|.$$ 
As in the proof of Lemma \ref{suplem}, we know that under these conditions on $a$ and $b$, $v$ and $w$ are convex in $\Omega$. Moreover, if $a+ b=1$ then $w=0$ on $\p\Omega$. 

By the convexity of $w$, we have
$$x\cdot Dw(x)- w(x)\geq 0\cdot Dw(x)- w(0)=C\gamma^a-C\gamma>0$$
and
$$x\cdot Dw(x)- w(x)= x\cdot Dv(x)- v(x) -C\gamma\leq x\cdot Dv(x)- v(x).$$
Since $\det D^2 w=\det D^2 v$ and $|w|\leq |v|$ in $\Omega$, in order to obtain the desired properties of $w$, it suffices to prove that for $a=\frac{2+k}{2n+ 2k+2}$, $b= 1-a$, we have for suitable $C$
\begin{equation}
\label{abCk}
\det D^2 v\leq |v|^{-n-2-k} (x\cdot Dv-v)^{-k}\quad\text{in } \Omega.
\end{equation}
We compute
\begin{align*}
\det D^2 v&= C^n (2b)^{n-1} (x_n+\gamma)^{na-2}(1-r^2)^{n(b-1)} a [1-a + (1-2b-a)r^2]\\
&=C^n (2b)^{n-1} (x_n+\gamma)^{na-2}(1-r^2)^{n(b-1)+1} a(1-a),\\
x\cdot Dv- v &= C (x_n+\gamma)^{a-1} (1-r^2)^{b-1} \left[ (x_n+\gamma- ax_n) (1-r^2) + 2br^2 (x_n+\gamma)\right]\\
&\leq 3C (x_n+\gamma)^{a-1} (1-r^2)^{b-1}.
\end{align*}
It follows that
\begin{multline*}(\det D^2 v) |v|^{n+ k+ 2} (x\cdot Dv-v)^k\\
\leq 3^k (2b)^{n-1} a(1-a)C^{2n+ 2k+ 2} (x_n +\gamma)^{na-2 + a(n+ k+ 2) + (a-1) k} (1-r^2)^{n(b-1) + 1 + b(n+ k+ 2) + (b-1)k}\\
=3^k (2b)^{n-1} a(1-a)C^{2n+ 2k+ 2} (x_n +\gamma)^{a(2n+ 2k+ 2)-2-k} (1-r^2)^{b(2n+ 2k+ 2)-n-k + 1}.
\end{multline*}
Thus (\ref{abCk}) holds for a suitable $C=C_0(n, k,\gamma)$ when $a=\frac{2+k}{2n+ 2k+2}$ and $b= 1-a$.
\end{proof}

\begin{proof}[Proof of Theorem \ref{SMAkthm}]  First, we prove (\ref{uHk}) for the unique convex solution $u\in C^{\infty}(\Omega)\cap C(\overline{\Omega})$ to (\ref{SMAnk22}). Let $$a=\frac{2+k}{2n+ 2k+2} \quad\text{and }\gamma_0=\dist(0,\p\Omega)>0.$$ 
Let $z$ be an arbitrary point in $\Omega$. Let $x_0\in\p\Omega$ be such that $|z-x_0|=\dist(z,\p\Omega)$. Suppose that the supporting hyperplane $l_{x_0}$ to $\p\Omega$ at $x_0$ has equation
$\vv{n}\cdot (x-x_0)=0$ and $$\Omega\subset \{x\in\R^n: \vv{n}\cdot (x-x_0)\geq 0\}.$$ Then $\gamma:= -\vv{n}\cdot x_0= \dist (0, l_{x_0})\geq \gamma_0$. From Lemma \ref{vallemk}, we find that
for a suitable $C= C(n, k, \diam (\Omega),\gamma_0)$, the function $$v(x):= [\vv{n}\cdot (x-x_0)]^a \left(|x-\vv{n}\cdot x|^2 -C\right)$$ is a subsolution to (\ref{SMAnk22}).  To see this, we can use a rotation to assume that $\vv{n}= (0,\cdots, 0, 1)$ and hence $x_0=((x_0)^{'}, -\gamma)$.

Using the comparison principle in Lemma \ref{complem}, we find $u\geq v$ and hence
\begin{equation}
\label{upua}
|u(z)|\leq |v(z)|\leq C |z-x_0|^a = C[\dist(z,\p\Omega)]^a. 
\end{equation}
This holds for all $z\in\Omega$ so (\ref{uHk}) is proved. By the convexity of $u$, we easily obtain $u\in C^a(\overline{\Omega})$.

Finally, we note that the optimality of the exponent $a$ follows from Lemma \ref{suplemk} and the comparison principle. Indeed, let $\Omega$, $w$ and $C_0$ be as in Lemma \ref{suplemk}. Let $u\in C^{\infty}(\Omega)\cap C(\overline{\Omega})$ be the unique convex solution to (\ref{SMAnk22}). Since $w$ is a supersolution to (\ref{SMAnk22}), by  the comparison principle in Lemma \ref{complem}, we find that $w\geq u$ in $\Omega$. Hence for $x=(0, x_n)\in\Omega$ where $-\gamma<x_n<1-\gamma$, we have
\begin{equation}
\label{lowua}
|u(x)|\geq |w(x)| = C_0[(x_n +\gamma)^a- (x_n+\gamma)]\geq C_0(x_n +\gamma)^a/2= C_0[\dist(x,\p\Omega)]^a/2
\end{equation}
if $x_n+\gamma>0$ is small. Hence,  the exponent $a$ in $u\in C^a(\overline{\Omega})$ is optimal.
\end{proof}
\begin{proof}[Proof of Proposition \ref{mixc}] 
Let $u\in C^{\infty}(\Omega)\cap C(\overline{\Omega})$ be the unique convex solution to (\ref{SMAnk22}) where $$\Omega=\{(x', x_n): |x^{'}|<1, 0<x_n+\gamma < 1- |x^{'}|^2\}.$$
Let $C= C(n, k,\gamma,\diam (\Omega))$ and $C_0= C_0(n,k,\gamma)$ be as in Lemmas \ref{vallemk} and \ref{suplemk}, respectively. Let $$a= \frac{2+k}{2n+ 2k+2}.$$ 
The main technical point of the proof is to obtain a positive lower bound comparable to $[\dist(x,\p\Omega)]^{a-1}$ for $x\cdot Du - u$; see (\ref{xDuu}).

Consider $x= (0, x_n)\in\Omega$ with $0<x_n+\gamma$ is small. Thus $x_n<0$. 
As in (\ref{upua}), we have
\begin{equation}
\label{upua2}
-u(x)=-u(0, x_n) =|u(0, x_n)|\leq C(x_n +\gamma)^a.
\end{equation}
For $m>0$ large to be chosen, we find from (\ref{lowua}) that as long as $(0, x_n + m(x_n+\gamma))\in\Omega$
$$u(0, x_n + m(x_n +\gamma)) \leq C_0\left\{(m+1)(x_n +\gamma)-[(m+1)(x_n +\gamma)]^a\right\}.$$
Hence, by the convexity of $u$, we have 
\begin{eqnarray*}u_{x_n}(0, x_n)&\leq& \frac{u(0, x_n + m(x_n +\gamma))- u(0, x_n)}{m(x_n+\gamma)}\\
&\leq& C_0\frac{m+ 1}{m} - (x_n +\gamma)^{a-1} \frac{C_0 (m+ 1)^a -C}{m}.
\end{eqnarray*}
We first choose $m$ large such that $$C_0 (m+ 1)^a -C> C_0/2.$$
Then
$$u_{x_n}(0, x_n) <C_0\frac{m+ 1}{m} -(x_n +\gamma)^{a-1} \frac{1}{2m}. $$
Now, recalling (\ref{lowua}), we choose $0<x_n +\gamma\leq \bar\gamma$ small so that $-\gamma<x_n<-\gamma/2$ and 
\begin{equation}
\label{lowua2}
|u(x)\geq C_1 (x_n +\gamma)^a,\quad u_{x_n}(0, x_n) < -C_1(x_n +\gamma)^{a-1},\quad C_1:= \min\{C_0/2, \frac{1}{4m}\}. 
\end{equation}
With these choices of $m$ and $x_n$, we have for $x=(0, x_n)$
\begin{equation}
\label{xDuu}
x\cdot Du - u>x_n u_{x_n}(0, x_n) >-C_1 x_n (x_n +\gamma)^{a-1}> C_2 (x_n +\gamma)^{a-1},\quad C_2=\frac{C_1\gamma}{2}. 
\end{equation}
Using this estimate together with (\ref{lowua2}) and recalling the definition of $a$, we find 
\begin{eqnarray*}f(x)&=& |u(x)|^{-n-2-k} (x\cdot Du(x) -u(x))^{-k}\\& \leq& C^{-n-2-k} C_2^{-k}  (x_n +\gamma)^{-a(n+2+ k)-k(a-1)} 
\\&=& C_3 (x_n +\gamma)^{\frac{(n-4)k-(2n+ 4)}{2n+ 2k+ 2}} \\&=& C_3 [\dist(x,\p\Omega]^{\frac{(n-4)k-(2n+ 4)}{2n+ 2k+ 2}}.
\end{eqnarray*}
Therefore, we obtain (\ref{fmix}), completing the proof of the proposition.
\end{proof}

\end{document}